\renewcommand{\theequation}{\thesection\arabic{equation}}
\newtheorem{theorem}{Theorem}
\newtheorem{corollary}{Corollary}
\newcommand{\mXbar}{\overline{\bf{X}}}
\newcommand{\bbeta}{\boldmath$\beta$}
\newcommand{\mbbeta}{\mbox{\bbeta}}
\newcommand{\bSigma}{\boldmath$\Sigma$}
\newcommand{\mbSigma}{\mbox{\bSigma}}
\newcommand{\bLambda}{\boldmath$\Lambda$}
\newcommand{\mbLambda}{\mbox{\bLambda}}
\newcommand{\bGamma}{\boldmath$\Gamma$}
\newcommand{\mbGamma}{\mbox{\bGamma}}
\newcommand{\bdelta}{\boldmath$\delta$}
\newcommand{\mbdelta}{\mbox{\bdelta}}
\newcommand{\btheta}{\boldmath$\theta$}
\newcommand{\mbtheta}{\mbox{\btheta}}
\newcommand{\bmu}{\boldmath$\mu$}
\newcommand{\mbmu}{\mbox{\bmu}}
\newcommand{\btau}{\boldmath$\tau$}
\newcommand{\mbtau}{\mbox{\btau}}
\newcommand{\bPhi}{\boldmath$\Phi$}
\newcommand{\mbPhi}{\mbox{\bPhi}}
\newcommand{\bPsi}{\boldmath$\Psi$}
\newcommand{\mbPsi}{\mbox{\bPsi}}
\newcolumntype{d}[1]{D{.}{.}{#1}}
	\theoremstyle{plain}
\titleformat{\section}{\Large\bfseries}{\thesection}{1em}{}
\titleformat{\subsection}{\normalsize\bfseries}{\thesubsection}{1em}{}
\titleformat{\subsubsection}{\normalsize\bfseries}{\thesubsubsection}{1em}{}
\numberwithin{equation}{section}
\begin{document}
	\renewcommand{\baselinestretch}{2}
	
	\markright{ \hbox{\footnotesize\rm Statistica Sinica
		}\hfill\\[-13pt]
		\hbox{\footnotesize\rm
		}\hfill }
	
	\markboth{\hfill{\footnotesize\rm FIRSTNAME1 LASTNAME1 AND FIRSTNAME2 LASTNAME2} \hfill}
	{\hfill {\footnotesize\rm FILL IN A SHORT RUNNING TITLE} \hfill}
	
	\renewcommand{\thefootnote}{}
	$\ $\par
	
	\fontsize{12}{14pt plus.8pt minus .6pt}\selectfont \vspace{0.8pc}
	\centerline{\large\bf The decomposite $T^{2}$-test when the dimension is large}
	\vspace{2pt} 
	\centerline{Chia-Hsuan Tsai~~and~~Ming-Tien Tsai} 
	\vspace{.4cm} 
	\centerline{\it Institute of Statistical Science, Academia Sinica, Taipei.}
	\vspace{.55cm} \fontsize{9}{11.5pt plus.8pt minus.6pt}\selectfont
	
	\begin{quotation}
		\noindent {\it Abstract:}
		In this paper, we discuss tests for mean vector of high-dimensional data when the dimension $p$
		is a function of sample size $n$. One of the tests, called the decomposite $T^{2}$-test, 
		in the high-dimensional testing problem is constructed based on the estimation 
		work of Ledoit and Wolf (2018), which is an optimal orthogonally equivariant estimator 
		of the inverse of population covariance matrix under Stein loss function.
		The asymptotic distribution function of the test statistic is investigated under a sequence of 
		local alternatives. The asymptotic relative efficiency is used to see whether a test is optimal 
		and to perform the power comparisons of tests. 
		An application of the decomposite $T^{2}$-test is in testing significance 
		for the effect of monthly unlimited transport policy on public transportation, 
		in which the data are taken from Taipei Metro System.
		
		\vspace{9pt}
		\noindent {\it Key words :}
		Asymptotically local power function, asymptotic relative efficiency,
		decomposite $T^{2}$-test, high-dimensional covariance matrix, 
		orthogonally equivariant estimator, Stieltjes transform.
		\par
	\end{quotation}\par

	\def\thefigure{\arabic{figure}}
	\def\thetable{\arabic{table}}
	
	\renewcommand{\theequation}{\thesection.\arabic{equation}}

	\fontsize{12}{14pt plus.8pt minus .6pt}\selectfont

	\section{Introduction}
	\indent Let ${\bf X}_{i}, i=1, \ldots, n$, be $n$ $i.i.d.$ random vectors having a $p$-dimensional 
	multinormal distribution with mean vector ${\mbmu}$ and unknown positive definite 
	covariance matrix ${\mbSigma}$. In this paper, we are interested in testing the hypothesis
	\vskip-1.5cm
	\begin{align}
		H_{0}:{\mbmu}={\bf 0}~\mbox{versus}~H_{1}:{\mbmu}\ne {\bf 0}, ~\label{eq:1.1}
	\end{align}
	\vskip-0.5cm
	\noindent when both dimension $p$ and sample size $n$ are large. Let
	\vskip-1.5cm
	\begin{align}
		{\mXbar}=\frac{1}{n}\sum_{i=1}^{n}{\bf X}_{i} ~\mbox{and}~{\bf S}=\frac{1}{n-1}\sum_{i=1}^{n}
		({\bf X}_{i}-{\mXbar})({\bf X}_{i}-{\mXbar})^{\top}. ~\label{eq:1.2}
	\end{align}
	\vskip-0.5cm
	\noindent The Hotelling's $T^{2}$-test statistic is given by
	\vskip-1.5cm
	\begin{align}
		T^{2}=n{\mXbar}^{\top}{\bf S}^{-1}{\mXbar}. ~\label{eq:1.3}
	\end{align}
	\vskip-0.5cm
	\indent When the dimension $p ~(< n)$ is fixed, the well-known Hotelling's $T^{2}$-test 
	enjoys many optimal properties (Anderson [1]). However, when the dimension $p$ becomes large,
	the sample covariance matrix ${\bf S}$ may not be a consistent estimator of 
	population covariance matrix $\mbSigma$ when $p \geq n$. 
	Such situation makes it a hard work to estimate the precision matrix and to 
	make further usage of it. Dempster [8] [9], Bai and Saranadasa [2] first observed this 
	phenomenon, proposed a non-exact test for the hypothesis testing problem 
	(1.1) with the dimension larger than the sample size. 
	Three decades later, Bai and Saranadasa [2] proposed a new test, 
	which ignored the information of ${\bf S}$ by taking identity matrix for replacement to simplify 
	the problem. The result showed that their test has the same asymptotic power as that of 
	the Dempster's test under some assumptions on the dimension, mean vector ${\mbmu}$ 
	and population covariance matrix ${\mbSigma}$. 
	Along this line, Chen and Qin [6], further modified the test statistic of 
	Bai and Saranadasa [2]. Srivastava and Du [23] and Srivastava [24]
	used the partial information of ${\bf S}$, namely the diagonal elements, to construct new test statistic. 
	Later, Park and Ayyala [21], modified the Srivastava type test
	by incorporating some information of correlations. Feng et al. [10] assumed that 
	the matrix ${\bf S}$ has a kind of block diagonal structure to construct the composite Hotelling's $T^{2}$ 
	type test statistic. On the other hand, Chen et al. [5], used the quantity 
	${\bf S}+\lambda {\bf I}$ to replace ${\bf S}$ in (1.3), where $\lambda > 0$. 
	They used the notion of ridge regression which is highly related
	to the concept of rotation-equivariant property after matrix decomposition. Then by using 
	method of shrinkage estimation they constructed the regularized Hotelling's test statistic
	and studied its asymptotic distribution. All the tests mentioned
	above can be viewed as various versions of regularized Hotelling's $T^{2}$-test. 
	Most of the situations considered are under the setup that both dimension $p$ and 
	sample size $n$ are large so that $\lim_{n \to \infty} p/n=c, c \in (0, \infty)$.
	
	\indent In this paper, we concentrate on the situation that $c \in (0, 1)$. Different from 
	those approaches existing in the literature, our approach try to reveal more information of 
	correlations in terms of eigenvalues. Stein [26] proposed the orthogonally equivariant estimator 
	of covariance matrix and Ledoit and Wolf [14] proposed another orthogonally equivariant estimator 
	of inverse covariance matrix. Ledoit and Wolf [16] claimed that their estimator is asymptotically optimal 
	in the sense of minimizing the Stein loss.
	
	\indent The rest of the paper is organized as follows. The notion of orthogonally equivariant estimators of 
	covariance matrix for large dimensional situation and some simple notations of random matrix theory 
	are introduced in Section~2. The decomposite $T^{2}$-test statistic is presented in Section~3. 
	And the asymptotically equivalent statistic $T^{2}_{0}$ along with its asymptotic local power property 
	are also investigated in the same section. Power comparisons based on the asymptotic 
	relative efficiency are discussed in Section~4. A real example is analyzed via the bootstrap test based on the decomposite 
	$T^{2}$-test statistic and the Hotelling's $T^{2}$-test statistic, respectively in Section~5. 
	The conclusion is given in Section~6.
	
	\section{The orthogonally equivariant estimators}
	\indent The class of orthogonally equivariant estimators of covariance matrix is constituted of all the estimators
	having the same eigenvectors as the sample covariance matrix. Consider the sample spectral decomposition, i.e.,
	${\bf S}={\bf U}{\mbLambda}{\bf U}^\top$, where ${\mbLambda}$ is a diagonal matrix with eigenvalues
	$\lambda_{i, p}, i = 1, \ldots, p$, and ${\bf U}=({\bf u}_{1}, \ldots, {\bf u}_{p})$ is the corresponding orthogonal
	matrix with ${\bf u}_{i}$ being the corresponding eigenvector with respective to $\lambda_{i, p}, i = 1, \ldots, p$.
	Similarly, for the spectral decomposition of population covariance matrix, we have 
	${\mbSigma} = {\bf V}{\mbGamma}{\bf V}^\top$,
	where ${\mbGamma}$ is a diagonal matrix with eigenvalues $\gamma_{i, p}, i = 1, \ldots, p$, and ${\bf V}$ is the
	corresponding orthogonal matrix. With respect to the Stein loss function, Stein [26], [27] 
	considered the orthogonally equivariant nonlinear shrinkage estimator which of the form
	\vskip-1.5cm
	\begin{align}\label{eq:2.1}
		&\widehat{\mbSigma}_{S}={\bf U}\widehat{\mbPhi}({\mbLambda}){\bf U}^{\top}, \mbox{where}~\widehat{\mbPhi}({\mbLambda})
		= \mbox{diag}(\widehat{\phi}_{1,p}({\mbLambda}), \ldots, \widehat{\phi}_{p, p}({\mbLambda}))
		~\mbox{with}~  \\[1ex] \nonumber
		&\widehat{\phi}_{i,p}({\mbLambda}) = n\lambda_{i,p}\left(n-p+1-2\lambda_{i,p}\sum_{j \ne i}
		\dfrac{1}{\lambda_{j,p}-\lambda_{i,p}}\right)^{-1}, ~i=1, \ldots, p.
	\end{align}
	However, some of the $1/(\lambda_{i,p}-\lambda_{j,p})$ might be negative and non-monotone. 
	To mitigate the problems, Stein recommended to use an isotonizing algorithm procedure to 
	adjust his estimators in (2.1). Stein's isotonized estimator has been considered as a gold standard, 
	thereafter a large strand of literature on orthogonally equivariant estimation of 
	population covariance matrix was generated.
	
	\indent The same as Ledoit and P{\'e}ch{\'e} [12], we make the following assumptions:
	
	\indent A1. Let ${\bf X}_{i} = \mbSigma^{1/2}{\bf y}_{i}, i=1,\ldots,n$, where ${\bf y}_{i}$ is a 
	$p \times 1$ vector of independent and identically distributed random variables $y_{ij}$. 
	Each $y_{ij}$ has mean $0$, unit variance and 12th absolute central moment 
	bounded by a constant.
	
	\indent A2. For large $(n,p)$ setup, the large dimensional asymptotic framework is setup when $(n,p) \to \infty$
	such that $p/n \to c$ is fixed, $0 \leq c < 1$ in this paper.
	
	\indent A3. The population covariance matrix is positive definite matrix. 
	Furthermore, $\|\mbSigma\|=O(p^{1/2})$, where
	$\|\cdot\|$ is the $L_{2}$ norm of a matrix.
	
	\indent A4. Let $0 < \gamma_{1, p} \leq \cdots \leq \gamma_{p, p} $. 
	The empirical spectral distribution of $\mbSigma$
	defined by $H_{n}(\gamma)=\frac{1}{p}\sum_{i=1}^{p}1_{[\gamma_{i, p}, \infty)}(\gamma)$, 
	converges as $p \to \infty$ to a
	probability distribution function $H(\gamma)$ at every point of continuity of $H$. 
	The support of $H$, $\mbox{supp}(H)$,
	is included in a compact set $[h_1, h_2]$ with $0 < h_1 \leq h_2 < \infty$.
	
	\indent Let $F_{n}(\lambda)=\frac{1}{p}\sum_{i=1}^{p}1_{[\lambda_{i, p}, \infty)}(\lambda)$ 
	be the sample spectral distribution and $F$ be its limiting distribution. We also assume that 
	there exists a nonrandom real function $\phi(x)$ defined on the support of $F$ and is 
	continuously differentiable on the support.\\
	\indent The Stieltjes transform of distribution function $F$ is defined by
	\vskip-1.3cm
	\begin{align}\label{eq:2.2}
		m_{F}(z) = \int_{-\infty}^{\infty}
		\dfrac{1}{\lambda-z}\, dF(\lambda), \linebreak \forall z \in {\mathcal C}^{+},
	\end{align}
	\vskip-0.5cm
	\noindent where ${\mathcal C}^{+}$ is the half-plane of complex numbers with a strictly positive imaginary part. 
	The empirical version is 
	\vskip-1.5cm
	\begin{align}\label{eq:2.3}
		m_{F_{n}}(z)=\dfrac{1}{p}\sum_{i=1}^{p}\dfrac{1}{\lambda_{i,p}-z}.
	\end{align}	
	\vskip-0.5cm
	\noindent Choi and Silverstein [7] showed that 
	\vskip-1.8cm
	\begin{align}\label{eq:2.4}
		\mathop{lim} \limits_{z\in {\mathcal C}^{+} \to x \in \mathcal R-\left\{0\right\} }m_{F}(z)= \check{m}_{F}(x)
	\end{align}	
	\vskip-0.5cm
	\noindent exists. Subsequently, the well known Mar{\v c}enko and Pastur equation (Choi and Silverstein [7]) in literature can be expressed in the form
	\vskip-1.5cm
	\begin{align}\label{eq:2.5}
		m_{F}(z)=\int_{-\infty}^{\infty}\frac{1}{\gamma[1-c-czm_{F}(z)]-z}dH(\gamma), \forall z \in  {\mathcal C}^{+},
	\end{align}
	\vskip-0.5cm
	\noindent where $H$ denotes the limiting behavior of the population spectral distribution. 
	Upon the Mar{\v c}enko-Pastur equation, meaningful information of the population spectral distribution can be 
	retrieved under the large dimensional asymptotic framework. Ledoit and P{\'e}ch{\'e} [12] extended the results to 
	the more general situations including the case of the precision matrix ${\mbSigma^{-1}}$. 
	In addition to estimate the population covariance matrix ${\mbSigma}$, they also estimate the 
	inverse of population covariance matrix $\mbSigma^{-1}$.
	
	\indent Consider $\Theta^{g}_{n}(z)=p^{-1}{\mbox tr}[({\bf S}-z{\bf I})^{-1}g(\mbSigma)]$, where $g(\cdot)$ is a scale function on the eigenvalues of a matrix such that $g(\mbSigma)={\bf V}\mbox{diag}(g(\gamma_{1, p}), \ldots,
	g(\gamma_{p, p})){\bf V}^{\top}$ (Ledoit and P{\'e}ch{\'e} [12], page 236). Ledoit and P{\'e}ch{\'e} [12] proved that $\Theta^{g}_{n}(z)$ converges to $\Theta^{g}(z)$ almost surely under the conditions A1-A4, where
	\vskip-1.2cm
	\begin{align}\label{eq:2.6}
		\Theta^{g}(z)=\int_{-\infty}^{\infty}\frac{1}{\gamma[1-c-czm_{F}(z)]-z}
		g(\gamma)dH(\gamma), \forall z \in  {\mathcal C}^{+}.  
	\end{align}
	\vskip-0.3cm
	\noindent Note that if $g(\mbSigma)={\bf I}$, then the equation (2.6) reduces to the equation (2.5).
	
	\indent Ledoit and Wolf [14] suggested to use the oracle estimators
	${\bf P}^{or}={\bf U}{\bf A}^{or}{\bf U}^{\top}$ for ${\mbSigma}^{-1}, \mbox{where}~{\bf A}^{or} =\mbox{diag}({a}^{or}_{1}, \ldots {a}^{or}_{p})$, with
	\vskip-1.3cm
	\begin{align}~\label{eq:2.7}
		a^{or}_{i}=\lambda^{-1}_{i}\left(1-c-2{c}\lambda_{i}\mbox{Re}\left[\check{m}_{F}(\lambda_{i})\right]\right), c \in [0, 1), \forall i=1,
		\ldots, p. 
	\end{align}
	\vskip-0.3cm
	\noindent Note that $\lambda_{i}$ is the quantile, i.e., $F^{-1}(\alpha)=\lambda_{i}$ with $[p\alpha]=i, 0 < \alpha < 1$, $i=1, \ldots, p$, where $[x]$ denotes the largest integer of $x$. 
	Also note that $a^{or}_{i}$ is nonrandom and an estimable quantity, $\forall i=1,\ldots,p$. 
	Let ${\bf u}^{\top}_{i}{\mbSigma}^{-1}{\bf u}_{i}=a^{*}_{i}$, Ledoit and P{\'e}ch{\'e} [12]
	showed that $a^{*}_{i}$ is approximated by $a^{or}_{i}, i=1, \ldots, p$. 
	Slightly different from the Stein's estimator in (2.1), 
	Ledoit and Wolf [14] proposed the estimator of ${\mbSigma}$ which is of the form
	\vskip-1.3cm
	\begin{align}~\label{eq:2.8}
		&\widehat{\mbSigma}_{LW}={\bf U}\widehat{\mbPhi}^{*}({\mbLambda}){\bf U}^{\top}, \mbox{where}~
		\widehat{\mbPhi}^{*}({\mbLambda})=\mbox{diag}(\widehat{\phi}^{*}_{1, p}({\mbLambda}),\ldots, \widehat{\phi}^{*}_{p, p}({\mbLambda}))   \\  \nonumber
		&~\mbox{with}~ \widehat{\phi}^{*}_{i, p}({\mbLambda}) = \frac{\lambda_{i, p}}{1-\frac{p}{n}-2\frac{p}{n}
			\lambda_{i, p}\mbox{Re}[\check{m}^{\widehat{\mbtau}_{n}}_{n, p}({\lambda_{i, p}})]}, \forall i = 1, \ldots, p,
	\end{align}
	\vskip-0.3cm
	\noindent where $\check{m}^{\widehat{\mbtau}_{n}}_{n, p}(\cdot)$ is the estimator of $\check{m}_{F}(\cdot)$, 
	as well as a multivariate quantized eigenvalues sample function. Ledoit and Wolf [14] showed that
	$||\widehat{\mbSigma}^{-1}_{LW}-{\bf P}^{or}|| /\sqrt{p}\to 0$ a.s. with rescaled Frobenius norm 
	concluded that $\mbox{Re}[\check{m}^{\widehat{\mbtau}_{n}}_{n, p}({\lambda_{i, p}})]$ 
	is the consistent estimator of $\mbox{Re}[\check{ m}_{F}(\lambda_{i})], \forall~ i=1, \ldots, p$. 
	Ledoit and Wolf [16] concluded that 
	$0 < \widehat{\phi}^{*}_{1, p}({\mbLambda}) < \widehat{\phi}^{*}_{2, p}({\mbLambda})< \ldots	
	< \widehat{\phi}^{*}_{p, p}({\mbLambda})$ with probability one, further asserted the asymptotic optimality 
	of their shrinkage estimator (2.8) under Stein loss function.  \\
	\indent Ledoit and Wolf [16] pointed out that both the estimators in (2.1) and (2.8) 
	have a similar form in terms of Cauchy principal value. The only difference between (2.1) and 
	(2.8) is that the former uses the empirical distribution $F_{n}$ of sample
	eigenvalues to estimate the Stieltjes transform of distribution function $m_{F}(z)$, 
	while the latter one uses a smoothed version
	$F^{\widehat{\mbtau}_{n}}_{n, p}$ instead. They also commented that Stein's estimator in (2.1) has theoretical limitations and claimed that their estimator performs better compared to 
	Stein's estimator, by the evidence of Monte-Carlo simulations. 
	
	
	\section{Main results} 
	\subsection{The decomposite test statistic \texorpdfstring{$T^{2}_{N}$}{T2N}}
	
	\indent For the problem (1.1), those tests proposed in the literature basically are made 
	by ignoring or using partial information from the sample covariance matrix. 
	The approach we adopt is to reveal the information of eigenvalues with the help of random matrix theory. 
	The orthogonally equivariant estimators of covariance matrix generally enjoy some
	optimal properties. The optimal one among the class of orthogonally equivariant estimators is mostly desired. 
	Ledoit and Wolf [14] claimed that $\widehat{\mbSigma}^{-1}_{LW}$ is 
	asymptotically optimal estimator of ${\mbSigma}^{-1}$ under Stein loss. 
	And hence, for the hypothesis testing problem (1.1) we may consider the test statistic
	\vskip-1.5cm
	\begin{align}~\label{eq:3.1} 
		T^{2}_{N}= n{\mXbar}^{\top}\widehat{\mbSigma}^{-1}_{LW}{\mXbar}. 
	\end{align}
	\vskip-5mm
	\hspace{-22pt}We may also note that $\widehat{\mbSigma}^{-1}_{LW}$ can be replaced by $\widehat{\mbSigma}^{-1}_{S}$, which is the
	inverse of matrix defined in (2.1).
	
	\indent Let ${\bf S}={\bf U}{\mbLambda}{\bf U}^{\top}=(s_{ij})$, take $\widehat{\mbPhi}^{*}(\mbLambda)$ 
	in (2.8) as (a) $\widehat{\mbPhi}_{0}(\mbLambda)={\bf I}$, 
	(b)~$\widehat{\mbPhi}_{1}(\mbLambda)=\mbox{diag}(s_{11},\ldots,s_{pp}), {\bf U}={\bf I}$, 
	(c)~$\widehat{\mbPhi}_{2}(\mbLambda)=\mbox{diag}(\lambda_{1,p},\ldots,\lambda_{p,p})$ and 
	(d)~$\widehat{\mbPhi}_{3}(\mbLambda)=\mbox{diag}(\lambda_{1,p}+\lambda,\ldots,\lambda_{p,p}+\lambda)$, 
	$\lambda >0$, then it~(3.1) reduces to the case of (a) Bai and Saranadasa [2], (b) Li et al. [17], 
	(c) the Hotelling's $\mbox{T}^{2}$-test~(1.3) and 
	(d) the regularized Hotelling's test Bai et al. [3] statistics, respectively.
	
	\indent  First, we may note that based on the results of Theorem~5 of Dempster [9], 
	${\bf u}^{\top}_{i}{\mbSigma}^{-1}{\bf u}_{i}=a^{*}_{i}$ are approximated by the quantity
	$a^{or}_{i}, i=1, \ldots, p$. Johnstone and Paul [11] proved that
	$\|\widehat{\mbSigma}^{-1}_{LW}-{\bf P}^{or}\|/\sqrt{p} \to 0$ a.s.
	under the rescaled Frobenius norm. Let ${\mbSigma}={\bf V}{\mbGamma}{\bf V}^{\top}$ and 
	${\mbSigma}_{1}={\bf V}{\mbGamma}^{*}{\bf V}^{\top}$, 
	where ${\mbGamma}=\mbox{diag}(\gamma_{1, p}, \ldots, \gamma_{p, p})$ 
	and ${\mbGamma}^{*}=\mbox{diag}(\gamma^{*}_{1, p}, \ldots, \gamma^{*}_{p, p})$ with
	$\gamma^{*}_{i, p}=1/a^{or}_{i},~i=1, \ldots, p$.
	
	\indent Since ${\bf A}=(n-1){\bf S}$ is Wishart distributed, when p is fixed we may note that ${\bf U}$ 
	is the maximum likelihood estimator (MLE) of ${\bf V}$ Muirhead [19]. 
	From the general theory of estimation that the maximum likelihood estimator is consistent, 
	it tends to the true value with probability one 
	as sample size becomes large under some regularity conditions which are satisfied by Wishart density. 
	When the dimension $p$ is fixed, we may conclude that 
	${\bf U}$ converges to ${\bf V}$ with probability one. Note that when $p$ is fixed and the sample size
	$n$ is large, $\widehat{\mbSigma}_{LW}$ reduces to the sample covariance matrix ${\bf S}$. Then
	$\widehat{\mbSigma}_{LW}$ converges to $\mbSigma$ with probability one. Namely, when dimension $p$ is fixed
	while the sample size $n$ is large, the decomposite $T^{2}_{N}$-test reduces to Hotelling's $T^{2}$-test statistic.
	Nevertheless, this optimal property remains wide open for large $p$ situation. 
	To overcome the difficulties, we also restrict
	the estimator of covariance matrix to the class of orthogonally equivariant estimators by imposing the sample
	eigenvector ${\bf u}_{i}$ on the corresponding population eigenvector ${\bf v}_{i}$ in this paper.
	
	\subsection{The asymptotically equivalent statistic of \texorpdfstring{$T^{2}_{N}$}{T2N} } 
	\indent The decomposite $T^{2}_{N}$-test statistic in (3.1) involves a mixture information of 
	nonlinear sample eigenvalues that complicates the task of deriving its distribution function. 
	By virtue of random matrix theory, Pan and Zhou [20] derived the limiting distribution function of 
	Hotelling's $T^{2}$-test statistic when ${\mbSigma}={\bf I}$. 
	Meanwhile, Chen et al. [5]  used Stieltjes transform to derive the asymptotic power distribution, 
	under $\mbox{H}_{0}$, of the regularized Hotelling's $T^{2}$-test statistic, which involves the 
	linear function of sample eigenvalues. Li et al. [17] extended the result for the one-sample 
	regularized Hotelling's $T^{2}$-test  of Chen et al. [5] to the two-sample problem 
	under a class of local alternatives. \\
	\indent Both the asymptotic power functions for the one-sample regularized Hotelling's $T^{2}$-test  of 
	Chen et al. (Theorem~1 and Proposition~2) and for the two-sample regularized Hotelling's $T^{2}$-test  of 
	Li et al. [17] are the functions of Stieltjes transform $\check{m}_{F}(x)$, defined in (2.4), 
	and its derivative. \\
	\indent Note that $\check{m}_{F}(x)$, when $x \in \mathcal{R}-\{0\}$, includes the real part $\mbox{Re}[\check{m}_{F}(x)]=\int_{-\infty}^{\infty}\dfrac{1}{t-x}\, dF(t)$ (Hilbert transform) and the imaginary part $\mbox{Im}[\check{m}_{F}(x)]=\pi f(x)$, where $f(x)=dF(x)/dx$.
	For example, when $\mbSigma={\bf I}$ the empirical density function 
	of eigenvalues converges weakly in probability to the Mar$\check{c}$enko-Pastur law
		$f(x)=\dfrac{1}{2\pi cx}\sqrt{(\xi_{+}-x)(x-\xi_{-})}\;,~x \in (\xi_{-} , \; \xi_{+})$,
	where $\xi_{-}=(1-\sqrt{c})^{2}$ and $\xi_{+}=(1+\sqrt{c})^{2}$. 
	Then, with $-\lambda=x$ and $\gamma=c$ in equation (13) of Chen et al. [5] the Stieltjes transform of $F(x)$ becomes 
	\begin{align}\label{neq:3.2}  
		\check{m}_{F}(x) = &{} \dfrac{(1-c-x)+\sqrt{(1-c-x)^{2}-4cx}}{2cx}\\
		= &{} \dfrac{(1-c-x)+\sqrt{-[(x-1-c)^2-4c]}}{2cx}\nonumber \\
		= &{} \dfrac{(1-c-x)+i\sqrt{(\xi_{+}-x)(x-\xi_{-})}}{2cx},~x \in (\xi_{-},\; \xi_{+}).\nonumber 
	\end{align}
	On the other hand, if we know the Stieltjes transform, we can from it deduce the limiting spectral density function 
		$f(x) = \dfrac{1}{\pi}\mbox{Im}[\check{m}_{F}(x)]$.\\
	\indent Thus both two asymptotic power functions of the regularized Hotelling's $T^{2}$-tests for one-sample problem 
	by Chen et al. [5] and for two-sample one by Li et al. [17]  are complex value functions, which seem to be against statistical 
	common sense for real test statistics.\\  
	\indent Under assumptions A1-A4, Mar{\v c}enko and Pastur [18] proved that $F_{n}$ converges to $F$ a.s..
	It is well known that $m_{F_{n}}(z)$ converges to $m_{F}(z)$ a.s., $z \in \mathcal{C}^{+}$. Both Chen et al. [5] 
	and Li et al. [17] concluded this convergence holds even when $x \in \mathcal{R}-\left\{0\right\}$ and directly used $m_{F_{n}}(x)$ as the consistent estimator of $\check{m}_{F}(x)$ to prove Proposition~1 of Chen et al. [5], when $x \in \mathcal{R}-\left\{0\right\}$. However, when $x \in \mathcal{R}-\left\{0\right\}$ we may note that
	\vskip-1.3cm
	\begingroup
	\allowdisplaybreaks
	\begin{align}~\label{eq:3.2}  
		m_{F_n}(x) =&{} \dfrac{1}{p}\sum_{i=1}^{p}\dfrac{1}{\lambda_{i,p}-x}
		= \int_{-\infty}^{\infty} \dfrac{1}{t-x}\,dF_{n}(t)\\[2ex]
		\to &{} \int_{-\infty}^{\infty} \dfrac{1}{t-x}\,dF(t)
		= \mbox{Re}[\check{m}_{F}(x)].\nonumber
	\end{align}
	\endgroup
	\vskip-0.5cm
	\noindent Hence $m_{F_n}(x)$ does not converge to $\check{m}_{F}(x)$ when $x \in \mathcal{R}-\left\{0\right\}$. 
	Thus, the Proposition~1 of Chen et al. [5] is not corrected and needs to be further re-investigated.\\  
	\indent To overcome the difficulties mentioned above we may instead try to find the 
	asymptotically equivalent statistic in distribution for $T^{2}_{N}$ called $T^{2}_{0}$, 
	defined in (3.6), which asymptotically local distribution and 
	asymptotically local power function can be acquired.\\
	\indent We assumed that the data come from the multinormal distribution, thus the sample mean vector 
	$\mXbar$ is independent of the sample covariance matrix ${\bf S}$, 
	namely $\mXbar$ is independent of ${\bf U}$ and $\mbLambda$.\\ 
	\indent Under assumptions A1-A4, Ledoit and Wolf [14] showed that 
	$\widehat\phi^{*}_{i,p}(\mbLambda)$ converges to 
	$1/a^{or}_{i}=\gamma^{*}_{i,p}$ a.s. as $n \to \infty$, $i=1,\ldots,p$.  
	And they ([14], Proposition 4.3) further proved that $\|\widehat{\mbSigma}^{-1}_{LW}-
	{\bf U}\mbGamma^{*-1}{\bf U}^{\top}\|_{F} \to 0$ a.s. as $p \to \infty$ 
	( i.e., $\| \widehat{\mbPhi}^{*-1}(\mbLambda)-\mbGamma^{*-1}\|_{F} \to 0$ a.s. as $p \to \infty$), 
	where $\| \cdot \|_{F}$ is the Frobenius norm defined as $\| A \|_{F} = \sqrt{\mbox{tr}\left(AA^{\top}\right)/p}$ 
	with tr(A) denoting the trace of matrix A. \\ 
	\indent Since $\widehat{\phi}^{*}_{i,p}(\mbLambda)$converges to $\gamma^{*}_{i,p}$ 
	a.s. as $p \to \infty, i=1, \ldots,p$, namely, $\widehat{\phi}^{*-1}_{i,p}(\mbLambda) -\gamma^{*-1}_{i,p}= 
	o_{p}(1), i=1,\ldots,p,$ where $o_{p}(1) = \frac{1}{p^\gamma},~r \in (0,\infty)$. 
	Thus without loss of generality, for simplicity we may write 
	$\widehat{\phi}^{*-1}_{i,p}(\mbLambda) -\gamma^{*-1}_{i,p}= \frac{1}{p^\gamma}, i=1,\ldots,p$. 
	That is the same to say that 
	$\widehat{\mbPhi}^{*-1}(\mbLambda)-{\mbGamma^{*}}^{-1}=\frac{1}{p^\gamma}{\bf I}$, 
	as $p \to \infty$. Note that 
	$n{\mXbar}^{\top}{\bf U}\left\{{\widehat{\mbPhi^{*}}}^{-1}(\mbLambda)-{\mbGamma^{*}}^{-1}\right\}$
	${\bf U}^{\top}{\mXbar} = \frac{n}{p^\gamma}{\mXbar}^{\top}{\bf U}{\bf U}^{\top}{\mXbar} = 
	\frac{n}{p^\gamma}{\mXbar}^{\top}{\mXbar} \to \frac{1}{p^\gamma}\mbox{tr}(\mbSigma^{0})$ as $p \to \infty$, 
	where $\mbSigma^{0} = \mbSigma+n\mbmu\mbmu^{\top}$, $\mbSigma^{0}$ 
	is also  a positive definite matrix under local
	alternative (3.10). Decompose $\mbSigma^0$ as ${\bf V}^0\mbGamma^0{{\bf V}^{0}}^{\top}$, 
	where $\mbGamma^0 = \mbox{diag}(\gamma_{1,p}^{0}, \ldots, \gamma_{p,p}^{0})$ with
	$0<\gamma^0_{1,p}<\ldots<\gamma^0_{p,p}<\infty$. Let $b_{1}=\lim_{p \to \infty} \frac{1}{p^r}\mbox{tr}(\mbSigma^{0})$,
	we have the following three situations (i) $b_1 \to \infty$ when $\gamma \in (0,1)$, (ii) $b_1 \to 0$ when 
	$\gamma \in (1, \infty)$ and (iii) $b_1$ is a nonrandom but unknown constant when $\gamma = 1$. 
	Note that for the case (i), we may have $n\mXbar^{\top}\widehat{\mbSigma}^{*-1}\mXbar \to \infty$, 
	which is against statistical common sense. For the case (iii), $b_1 \to 0$ when $\gamma \in (0, \infty)$, 
	namely, it is the same as the fixed dimensional case. Hence without loss of generality, 
	we may only consider the case $\gamma = 1$ in details.
	Note that $0<p\gamma^{0}_{1,p}	\leq\mbox{tr}(\mbSigma^{0})\leq p\gamma^{0}_{p,p}$, 
	thus $0<\gamma^{0}_{1,p}\leq b_{1}\leq\gamma^{0}_{p,p}	< \infty$. 
	As such, $n{\mXbar}^{\top}{\bf U}\left\{{\widehat{\mbPhi^{*}}}^{-1}(\mbLambda)-{\mbGamma^{*}}^{-1}\right\}$
	${\bf U}^{\top}{\mXbar} \to b_{1}$ a.s. as $p \to \infty$.
	Hence for the high dimensional case, we may obtain that 
	$T^{2}_{N} = n{\mXbar}^{\top}{\bf U}{\widehat{\mbPhi^{*}}}^{-1}{\bf U}^{\top}{\mXbar} \to 
	n{\mXbar}^{\top}{\bf U}{{\mbGamma^{*}}}^{-1}{\bf U}^{\top}{\mXbar}+b_{1}$ in probability as $p \to \infty$.
	This result implies that $T^{2}_{N} \to n{\mXbar}^{\top}{\bf U}\mbGamma^{*-1}{\bf U}^{\top}{\mXbar} +b_{1}$ 
	in distribution as $p \to \infty$. Namely,
	\vskip-1cm
	\begin{align*} 
		\dfrac{T^{2}_{N}-\mathcal{E}T^{2}_{N}}{\sqrt{{\mathcal Var}T^{2}_{N}}}\stackrel{\mathscr{D}}{\Longrightarrow}
		\dfrac{n{\mXbar}^{\top}{\bf U}\mbGamma^{*-1}{\bf U}^{\top}{\mXbar}+b_{1}
			-{\mathcal E}\left(n{\mXbar}^{\top}{\bf U}\mbGamma^{*-1}{\bf U}^{\top}{\mXbar} +b_{1}\right)}
		{\sqrt{{\mathcal Var}\left(n{\mXbar}^{\top}{\bf U}\mbGamma^{*-1}{\bf U}^{\top}{\mXbar} +b_{1}\right)}} ~~\mbox{as} ~p \to \infty.
	\end{align*}
	\indent Furthermore, it is easy to note that  
	under normalization both two statistics $n{\mXbar}^{\top}{\bf U}\mbGamma^{*-1}{\bf U}^{\top}{\mXbar}+b_{1}$ 
	and $n{\mXbar}^{\top}{\bf U}\mbGamma^{*-1}{\bf U}^{\top}{\mXbar}$ have the same asymptotic distribution function as $p \to \infty$.
	Thus we may obtain that  
	\begin{align*} 
		\dfrac{T^{2}_{N}-\mathcal{E}T^{2}_{N}}{\sqrt{{\mathcal Var}T^{2}_{N}}}\stackrel{\mathscr{D}}{\Longrightarrow}
		\dfrac{n{\mXbar}^{\top}{\bf U}{\mbGamma}^{*-1}{\bf U}^{\top}{\mXbar}-
			\mathcal{E}\left(n{\mXbar}^{\top}{\bf U}{\mbGamma}^{*-1}{\bf U}^{\top}{\mXbar}\right)}
		{\sqrt{{\mathcal Var}\left(n{\mXbar}^{\top}{\bf U}{\mbGamma}^{*-1}{\bf U}^{\top}{\mXbar}\right)}}~~\mbox{as} ~p \to \infty.
	\end{align*}
	Hence we may have the following conclusion:
	\vskip-1.6cm
	\begin{align}~\label{eq:3.3}
		T^{2}_{N}= ~n{\mXbar}^{\top}{\bf U}\widehat{\mbPhi}^{*-1}(\mbLambda){\bf U}^{\top}{\mXbar}  
		\to~n{\mXbar}^{\top}{\bf U}\mbGamma^{*-1}{\bf U}^{\top}{\mXbar}
	\end{align}
	\vskip-0.8cm
	\hspace{-22pt}in distribution as $p \to \infty$.
	
	\indent When $\mbSigma={\bf I}$, from~(3.2) we may note that $\mbox{Re}[\check{m}_{F}(x)]=(1-c-x)/(2cx)$ 
	and then from~(2.7) we have that $1/a^{or}_{i}=\gamma^{*}_{i, p}=1,~i = 1, \ldots, p$, 
	i.e., $\Gamma^{*}={\bf I}$. Thus, by equation~(3.4) we may obtain that 
	$T^{2}_{N} \to n{\mXbar}^{\top}{\bf U}{\bf U}^{\top}{\mXbar}=n{\mXbar}^{\top}{\mXbar}\sim \chi_{p}^{2}(n{\mbmu}^{\top}{\mbmu})$ 
	in distribution as $p \to \infty$, where $\chi_{p}^{2}(n{\mbmu}^{\top}{\mbmu})$ 
	is non-central chi-square distributed with 
	$p$ degrees of freedom and non-centrality $n{\mbmu}^{\top}{\mbmu}$. 
	Therefore, we have the following theorem.
	
	\begin{theorem}
		Under assumptions A1-A4, when $\mbSigma={\bf I}$, then $T^{2}_{N}$ 
		is asymptotically equivalent to $\chi_{p}^{2}(n{\mbmu}^{\top}{\mbmu})$ in distribution as $p \to \infty$.
	\end{theorem}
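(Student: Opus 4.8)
The plan is to combine three facts, two of which are already available in the preceding discussion. First I would invoke the asymptotic equivalence \eqref{eq:3.3} (valid in the regime $\gamma=1$ treated there): the discrepancy $\widehat{\mbPhi}^{*-1}(\mbLambda)-\mbGamma^{*-1}$ is $o_{p}(1)$ in the rescaled Frobenius norm, so $n\mXbar^{\top}{\bf U}\{\widehat{\mbPhi}^{*-1}(\mbLambda)-\mbGamma^{*-1}\}{\bf U}^{\top}\mXbar\to b_{1}$, a finite constant (because $0<\gamma^{0}_{1,p}\leq b_{1}\leq\gamma^{0}_{p,p}<\infty$ for $\mbSigma^{0}=\mbSigma+n\mbmu\mbmu^{\top}$), and an additive constant is absorbed under centering and scaling; hence $T^{2}_{N}$ and $n\mXbar^{\top}{\bf U}\mbGamma^{*-1}{\bf U}^{\top}\mXbar$ share the same limiting distribution as $p\to\infty$.

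Second, I would specialize to $\mbSigma={\bf I}$. Then $F$ is the Mar{\v c}enko--Pastur law, so by \eqref{neq:3.2} one has $\mbox{Re}[\check{m}_{F}(x)]=(1-c-x)/(2cx)$ on $(\xi_{-},\xi_{+})$. Substituting $x=\lambda_{i}$ into the oracle coefficient \eqref{eq:2.7} gives $2c\lambda_{i}\mbox{Re}[\check{m}_{F}(\lambda_{i})]=1-c-\lambda_{i}$, whence $a^{or}_{i}=\lambda_{i}^{-1}\bigl(1-c-(1-c-\lambda_{i})\bigr)=1$, and therefore $\gamma^{*}_{i,p}=1/a^{or}_{i}=1$ for every $i$, i.e. $\mbGamma^{*}={\bf I}$. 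Third, with $\mbGamma^{*}={\bf I}$ the orthogonality ${\bf U}{\bf U}^{\top}={\bf I}$ makes the quadratic form collapse \emph{exactly}, for every $n$ and $p$: $n\mXbar^{\top}{\bf U}\mbGamma^{*-1}{\bf U}^{\top}\mXbar=n\mXbar^{\top}\mXbar$. Since ${\bf X}_{1},\ldots,{\bf X}_{n}$ are i.i.d. $p$-variate normal with mean $\mbmu$ and covariance ${\bf I}$, the vector $\sqrt{n}\,\mXbar$ is $N_{p}(\sqrt{n}\,\mbmu,{\bf I})$, so $n\mXbar^{\top}\mXbar=(\sqrt{n}\,\mXbar)^{\top}(\sqrt{n}\,\mXbar)$ is exactly non-central chi-square with $p$ degrees of freedom and non-centrality $\|\sqrt{n}\,\mbmu\|^{2}=n\mbmu^{\top}\mbmu$. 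Chaining the three steps yields the claim.

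The substantive content sits entirely in the first step, which is \eqref{eq:3.3} and is taken as given; after that, the only place any random-matrix input is needed is the one-line evaluation of $\mbox{Re}[\check{m}_{F}]$ for the identity covariance, which collapses to $\mbGamma^{*}={\bf I}$, and the rest --- orthogonality of ${\bf U}$ together with the standard fact that a quadratic form of a normal vector is non-central chi-square --- is elementary. I therefore do not anticipate a genuine obstacle beyond being careful that the additive term $b_{1}$ in the first step is negligible relative to the $O(\sqrt{p})$ fluctuations of $\chi_{p}^{2}$ once the statistic is centered and scaled.
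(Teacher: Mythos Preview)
Your proposal is correct and follows essentially the same route as the paper: invoke the distributional equivalence \eqref{eq:3.3}, then use \eqref{neq:3.2} to compute $\mbox{Re}[\check{m}_{F}(x)]=(1-c-x)/(2cx)$ under $\mbSigma={\bf I}$ so that \eqref{eq:2.7} gives $a^{or}_{i}=1$ and hence $\mbGamma^{*}={\bf I}$, and finally collapse $n\mXbar^{\top}{\bf U}{\bf U}^{\top}\mXbar=n\mXbar^{\top}\mXbar\sim\chi_{p}^{2}(n\mbmu^{\top}\mbmu)$. The only difference is cosmetic: you spell out the $b_{1}$ absorption and the $a^{or}_{i}=1$ algebra in slightly more detail than the paper does.
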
	
	For the hypothesis testing problem~(1.1), when $\mbSigma={\bf I}$ 
	Theorem~1 indicates that the decomposite $T^{2}_{N}$-test may be 
	asymptotically optimal when the dimension is large, while the Hotelling's $T^2$-test is not.\\
	\indent However, the situation may be different for general $\mbSigma$. If ${\bf U}$ as the consistent estimator of ${\bf V}$ can be true, we then have $\widehat{\mbSigma}^{-1}_{LW}$ 
	converges to ${\mbSigma}^{-1}_{1}$ with probability one. 
	Despite that we adopt the Ledoit and Wolf's optimal estimator (eq:2.8) 
	of the population precision matrix, however, ${\mbSigma}_{1}$ may not generally be equal to ${\mbSigma}$, 
	unless that ${\mbGamma}^{*}={\mbGamma}$ (i.e., ${1}/{{a^{or}_{i}}}=\gamma_{i,p}, \forall i=1, \ldots, p$). 
	The orthogonal matrix $\bf U$ may not generally be a consistent estimator of $\bf V$ when the dimension $p$ 
	is large, (see Bai et al. [4] and references therein). Hence we may work it under the restricted model, 
	namely, under the Wishart distribution setup when $p/n \to c \in (0, 1)$. \\	
	\indent Note that ${\bf U} \in \mathcal{Q}(p)$, the  group of $p \times p$ orthogonal matrices, 
	which is a compact group. 
	Hence, there exists a subsequence $\{n_{1}\}$ such that ${\bf U}_{n_{1}}$ 
	converges to ${\bf V}$ a.s. as $n \to \infty$. 
	For the case $c\in (0,1)$, when $\mbSigma = {\bf I}$ we obtain that $\mbGamma^{*}={\bf I}$ as $p\to \infty$, 
	and hence we may have that 
	$n{\mXbar}^{\top}{\bf U}\mbGamma^{*-1}{\bf U}^{\top}{\mXbar} = n{\mXbar}^{\top}{\bf U}{\bf U}^{\top}{\mXbar}=
	n{\mXbar}^{\top}{\mXbar}=n{\mXbar}^{\top}{\bf V}{\bf V}^{\top}{\mXbar}=n{\mXbar}^{\top}{\bf V}\mbGamma^{*-1}{\bf V}^{\top}{\mXbar} $. Thus by the equation~(3.4), when $\mbSigma = {\bf I}$ 
	we have that $T^{2}_{N}\to n{\mXbar}^{\top}{\bf V}{\mbGamma}^{*-1}{\bf V}^{\top}{\mXbar}$ 
	in distribution as $p \to \infty$.\\
	\indent In the general $\mbSigma$ case, for simplicity we may investigate the limiting distribution function of 
	$T^{2}_{N}$ under the assumption that ${\bf U}$ converges to  ${\bf V}_{1}$ a.s. in the weak topology as 
	$n \to  \infty$, ${\bf V}_{1} \in \mathcal{Q}(p)$. 
	By equation~(3.4) we then have
	\vskip-1.3cm 
	\begin{align}~\label{eq:3.4} 
		T^{2}_{N} \to n{\mXbar}^{\top}{\bf V}_{1}{\mbGamma}^{*-1}{\bf V}_{1}^{\top}{\mXbar}
	\end{align}
	\vskip-0.5cm
	\noindent in distribution as $p \to \infty$. \\
	\indent Furthermore, since $\mathcal{Q}(p)$ is one orbit, and hence by the theory of compact group there exist some 
	${\bf G} \in \mathcal{Q}(p)$ with ${\bf G}^{\top}{\bf V}_{1}={\bf V}$. Therefore
	\vskip-1.5cm
	\begin{align}~\label{eq:3.5}
		T^{2}_{N}\to  n{\mXbar}^{\top}{\bf G}{\bf V}{\mbGamma}^{*-1}{\bf V}^{\top}{\bf G}^{\top}{\mXbar}
		= n{\mXbar}^{\top}{\bf G}{\mbSigma}^{-1}_{1}{\bf G}^{\top}{\mXbar}
		= T^{2}_{0}, ~\mbox{say}
	\end{align}
	\vskip-0.5cm
	\noindent in distribution as $p \to \infty$. Note that 
	${\bf G}{\mbSigma}^{-1}_{1}{\bf G}^{\top} \neq \mbSigma^{-1}$ generally. 
	This will make things for high-dimensional situations different from those for the fixed dimensions. 
	Both Stein [26] and Ledoit and Wolf [14] directly considered the case when ${\bf G} = {\bf I}$. 
	Namely, when $\bf U$ converges 
	to $\bf V$ a.s. as $p \to \infty$.
	\begin{theorem} 
		Assume that ${\bf U}$ converges to ${\bf V}_{1}$ a.s. in the weak topology as 
		$n \to \infty$, ${\bf V}_{1} \in \mathcal{Q}(p)$. 
		Then under assumptions A1-A4, $T^{2}_{N}$ is asymptotically equivalent to $T^{2}_{0}$ (
		defined in (3.6)) in distribution as $p \to \infty$.
	\end{theorem}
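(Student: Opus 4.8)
The plan is to make rigorous the chain of reductions (3.3)--(3.6) already sketched above, supplying the estimates that justify each passage to the limit. Write $T^{2}_{N} = n{\mXbar}^{\top}{\bf U}\widehat{\mbPhi}^{*-1}({\mbLambda}){\bf U}^{\top}{\mXbar} = n{\mXbar}^{\top}{\bf U}{\mbGamma}^{*-1}{\bf U}^{\top}{\mXbar} + R_{p}$, where $R_{p} = n{\mXbar}^{\top}{\bf U}(\widehat{\mbPhi}^{*-1}({\mbLambda})-{\mbGamma}^{*-1}){\bf U}^{\top}{\mXbar}$. The first step is to show $R_{p}\to b_{1}$ in probability as $p\to\infty$, where $b_{1}=\lim_{p\to\infty}p^{-1}\mbox{tr}(\mbSigma^{0})$ and $\mbSigma^{0}=\mbSigma+n\mbmu\mbmu^{\top}$. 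For this I would invoke Proposition~4.3 of Ledoit and Wolf [14], which gives $\|\widehat{\mbPhi}^{*-1}({\mbLambda})-{\mbGamma}^{*-1}\|_{F}\to 0$ a.s.; restrict to the $\gamma=1$ regime (the regimes $\gamma\in(0,1)$ and $\gamma\in(1,\infty)$ having been set aside before the statement); and exploit that, conditionally on $({\bf U},{\mbLambda})$, the vector $\mXbar$ is $N(\mbmu,\mbSigma/n)$ and independent of them, so that $n{\mXbar}^{\top}{\mXbar}$ concentrates around $\mbox{tr}(\mbSigma^{0})$ with fluctuations of order $O(1)$ while $\mbox{tr}(\mbSigma^{0})$ is of order $p$; after dividing by $p$ the fluctuation vanishes and only $b_{1}$ survives.

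Granting this, $T^{2}_{N} = n{\mXbar}^{\top}{\bf U}{\mbGamma}^{*-1}{\bf U}^{\top}{\mXbar} + b_{1} + o_{p}(1)$. Since $b_{1}$ is a nonrandom constant, centering by the mean and scaling by the standard deviation removes the additive $b_{1}$, so that $T^{2}_{N}$ and $n{\mXbar}^{\top}{\bf U}{\mbGamma}^{*-1}{\bf U}^{\top}{\mXbar}$ have the same limiting distribution as $p\to\infty$; this is precisely (3.4). Next I would bring in the standing hypothesis ${\bf U}\to{\bf V}_{1}$ a.s. in the weak topology: since the map ${\bf W}\mapsto n{\mXbar}^{\top}{\bf W}{\mbGamma}^{*-1}{\bf W}^{\top}{\mXbar}$ is continuous and ${\bf U}$ is independent of $\mXbar$, the continuous-mapping theorem yields $n{\mXbar}^{\top}{\bf U}{\mbGamma}^{*-1}{\bf U}^{\top}{\mXbar}\to n{\mXbar}^{\top}{\bf V}_{1}{\mbGamma}^{*-1}{\bf V}_{1}^{\top}{\mXbar}$ in distribution, which is (3.5).

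To close, use that $\mathcal{Q}(p)$ is a single orbit under left translation, so there is ${\bf G}\in\mathcal{Q}(p)$ (one may take ${\bf G}={\bf V}_{1}{\bf V}^{\top}$) with ${\bf G}^{\top}{\bf V}_{1}={\bf V}$; substituting ${\bf V}_{1}={\bf G}{\bf V}$ and recalling ${\mbSigma}_{1}^{-1}={\bf V}{\mbGamma}^{*-1}{\bf V}^{\top}$ rewrites the limit as $n{\mXbar}^{\top}{\bf G}{\mbSigma}_{1}^{-1}{\bf G}^{\top}{\mXbar}=T^{2}_{0}$, the statistic of (3.6). Chaining the three convergences gives $T^{2}_{N}\to T^{2}_{0}$ in distribution as $p\to\infty$, which is the assertion of the theorem.

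The main obstacle is the first step. The Frobenius-norm bound $\|\widehat{\mbPhi}^{*-1}({\mbLambda})-{\mbGamma}^{*-1}\|_{F}\to 0$ controls only an averaged $\ell_{2}$ discrepancy of the eigenvalue estimates, whereas pinning $R_{p}$ down to the exact constant $b_{1}$, rather than merely bounding it, calls for a uniform, entrywise rate on $\widehat{\phi}^{*-1}_{i,p}-\gamma^{*-1}_{i,p}$ together with control of the random weights $n({\bf u}_{i}^{\top}\mXbar)^{2}$. The argument preceding the theorem finesses this by treating $\widehat{\mbPhi}^{*-1}({\mbLambda})-{\mbGamma}^{*-1}$ as exactly $p^{-1}{\bf I}$; replacing that shortcut by a genuine Cauchy--Schwarz estimate combined with a concentration bound for $p^{-1}\sum_{i}n^{2}({\bf u}_{i}^{\top}\mXbar)^{4}$ is where the real work sits. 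A secondary point worth verifying is that a.s. convergence of ${\bf U}$, along a subsequence and in the weak topology, is enough to push the continuous-mapping step through on the quadratic form.
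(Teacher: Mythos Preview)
Your proposal follows essentially the same route as the paper: the paper's ``proof'' of Theorem~2 is precisely the discussion preceding the theorem statement, namely the chain (3.4)--(3.6) together with the paragraph deriving $R_{p}\to b_{1}$ via the simplification $\widehat{\mbPhi}^{*-1}({\mbLambda})-{\mbGamma}^{*-1}=p^{-1}{\bf I}$, the normalization step that kills the additive constant, the passage ${\bf U}\to{\bf V}_{1}$, and the orbit argument producing ${\bf G}$. You have reproduced this outline faithfully and, in fact, been more candid than the paper about the weak point: the paper does not supply any argument beyond the $p^{-1}{\bf I}$ shortcut for the first reduction, so your closing paragraph correctly locates where the rigor is missing in both your sketch and the original.
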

	
	
	\subsection{The asymptotic distribution of \texorpdfstring{$T^{2}_{N}$}{T2N} } 
	\indent Let ${\bf Z}=\sqrt{n}{\mbSigma}^{-1/2}{\mXbar}$, then ${\bf Z} \sim N(\sqrt{n}{\mbSigma}^{-1/2}{\mbmu},{\bf I})$.
	Let ${\mbSigma}_{2}=\mbSigma^{1/2}{\bf G}\mbSigma^{-1}_{1}{\bf G}^{\top}\mbSigma^{1/2}$, and decompose it as 
	${\bf V}_{2}\mbPsi (\mbGamma){\bf V}_{2}^{\top}$, 
	where $\mbPsi (\mbGamma)=\mbox{diag}(\psi_{1},\ldots,\psi_{p}),~0<\psi_{1}\le\psi_{2}\le\ldots\le\psi_{p}<\infty$. 
	Let ${\bf W}={\bf V}_{2}{\bf Z}$, then ${\bf W} \sim N(\mbox{\btheta},{\bf I})$, where \mbtheta=$\sqrt{n}{\bf V}_{2}{\mbSigma}^{-1/2}\mbmu=(\theta_1,\ldots,\theta_p)^{\top}$. 
	And
	\vskip-1.2cm
	\begin{align}\label{eq:3.6} 
		T^{2}_{0}= {\bf Z}^{\top}{\mbSigma}_{2}{\bf Z}
		={\bf W}^{\top}{\mbPsi}(\mbGamma){\bf W}
		=\sum_{i=1}^{p}{\psi}_{i}w_{i}^{2},	
	\end{align}
	\vskip-0.5cm
	\noindent which is the mixture of non-central chi-square distributions. By the results of 
	Corollary 1.3.5 of Muirhead [19], after some straightforward algebraic calculations, we have
	\vskip-1.2cm
	\begin{align}
		{\mathcal E} T^{2}_{0}&=\sum_{i=1}^{p}\psi_{i}+\sum_{i=1}^{p}\psi_i{\theta}^{2}_{i}~\label{eq:3.7} 
	\end{align}
	\vskip-0.5cm
	\noindent and
	\vskip-1.5cm
	\begin{align}
		{\mathcal Var} T^{2}_{0}&=2\sum_{i=1}^{p}\psi_{i}^{2}+4\sum_{i=1}^{p}\psi^{2}_{i}{\theta}^{2}_{i}. 
	\end{align}
	\vskip-0cm 
	\indent Generally, the power of any reasonable test goes to one when the sample size $n$ 
	is large (Chen et al. [5],Theorem~1).
	Thus, it is hard to compare the tests when the sample size $n$ goes to infinity. As such, we may use the local power to compare the tests. We extent the concept of local power from the fixed dimensional situation to the large dimensional one.
	Different from the fixed dimensional one, we incorporate the dimension $p$ into the consideration for the large dimensional situation. We study the asymptotic distribution of $T^{2}_{N}$ under the sequence of local alternatives
	\vskip-1.5cm
	\begin{align}~\label{eq:3.8}
		H_{0}:{\mbmu}={\bf 0}~\mbox{versus}~H_{1n}:{\mbmu}=n^{-1/2}p^{1/4}{\mbdelta},
	\end{align}
	\vskip-0.5cm
	\noindent where $\mbdelta$ is a fixed $p$-dimensional vector, which means to assume that 
	${\mbdelta}^{\top}{\mbSigma}^{-1}{\mbdelta} < \infty$ when $p$ is large. 
	We may remark that this local alternative is equivalent to the one $\|{\mbmu}\|=O(n^{-1/2}p^{1/4})$ 
	considered in Feng et al. [10].\\
	\indent Let $\mbmu=n^{-1/2}p^{1/4}\mbdelta$, then \mbtheta = $p^{1/4}$\mbbeta, 
	where \mbbeta=${\bf V}_{2}\mbSigma^{-1/2} {\mbdelta}$. 
	Note that $\sum_{i=1}^{p}\psi_{i}\,\beta^{2}_{i}\le\psi_{p}\sum_{i=1}^{p}\beta^{2}_{i}=\psi_{p}{\mbdelta}^{\top}
	\mbSigma^{-1}{\mbdelta}<\infty$ and $\sum_{i=1}^{p}\psi^{2}_{i}\beta^{2}_{i}\le
	\psi^{2}_{p}\sum_{i=1}^{p}\beta^{2}_{i}=\psi^{2}_{p}{\mbdelta}^{\top}\mbSigma^{-1}{\mbdelta}<\infty$.
	Thus, $p^{-1/2}\sum_{i=1}^{p}\psi_{i}\,\beta^{2}_{i} \to 0$ and $p^{-1/2}\sum_{i=1}^{p}\psi^{2}_{i}\beta^{2}_{i} \to 0$ 
	as $p \to \infty$.	
	\begin{theorem} 
		Under the assumptions of Theorem~2 and the sequence of local alternatives $H_{1n}$ defined in (3.10), 
		the asymptotic power function of test statistic $T^{2}_{N}$ in (3.1) is
		\vskip-1cm
		\begin{align}~\label{eq:3.9}
			\beta(\mbmu) \approx \Phi\left(-z_{{\alpha }}+(2d)^{-1/2}\sum_{i=1}^{p}\psi_{i}\,\beta^{2}_{i}\right),
		\end{align}
		\hspace{-4pt} where $\Phi(\cdot)$ denotes the standard normal distribution, 
		and $d= \lim_{p \to \infty} \sum_{i=1}^{p}\psi^{2}_{i}/p$ being a positive constant. 
	\end{theorem}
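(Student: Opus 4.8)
The plan is to run everything through the distributional reduction of Theorem~2 and then prove a Lyapunov-type central limit theorem for the weighted non-central chi-square sum $T^{2}_{0}$. First I would invoke Theorem~2: under A1--A4 the statistic $T^{2}_{N}$ is asymptotically equivalent in distribution, as $p\to\infty$, to $T^{2}_{0}=\sum_{i=1}^{p}\psi_{i}w_{i}^{2}$ with ${\bf W}=(w_{1},\ldots,w_{p})^{\top}\sim N(\mbtheta,{\bf I})$ and, under $H_{1n}$, $\mbtheta=p^{1/4}\mbbeta$ with $\mbbeta={\bf V}_{2}\mbSigma^{-1/2}\mbdelta$. Thus the problem reduces to a sum of $p$ independent scaled non-central $\chi^{2}_{1}$ variables, for which (3.8)--(3.9) give $\mathcal{E}_{H_{1n}}T^{2}_{0}=\sum_{i}\psi_{i}+p^{1/2}\sum_{i}\psi_{i}\beta_{i}^{2}$ and $\mathcal{Var}_{H_{1n}}T^{2}_{0}=2\sum_{i}\psi_{i}^{2}+4p^{1/2}\sum_{i}\psi_{i}^{2}\beta_{i}^{2}$, while under $H_{0}$ ($\mbtheta={\bf 0}$) these become $\sum_{i}\psi_{i}$ and $2\sum_{i}\psi_{i}^{2}$.

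Next I would show that $(T^{2}_{0}-\mathcal{E}T^{2}_{0})/\sqrt{\mathcal{Var}\,T^{2}_{0}}\stackrel{\mathscr{D}}{\Longrightarrow}N(0,1)$ under both $H_{0}$ and $H_{1n}$, by verifying Lyapunov's condition for the independent summands $\psi_{i}w_{i}^{2}$. The $\psi_{i}$, being the eigenvalues of $\mbSigma_{2}=\mbSigma^{1/2}{\bf G}\mbSigma^{-1}_{1}{\bf G}^{\top}\mbSigma^{1/2}$, lie in a fixed compact subinterval of $(0,\infty)$ uniformly in $p$ by A3--A4 and the boundedness of the oracle quantities $a^{or}_{i}$; since moreover $\theta_{i}^{2}=p^{1/2}\beta_{i}^{2}$ with $\sum_{i}\beta_{i}^{2}=\mbdelta^{\top}\mbSigma^{-1}\mbdelta<\infty$, the third absolute central moment of $\psi_{i}w_{i}^{2}$ is $O(1+\theta_{i}^{2})$, so $\sum_{i}\mathcal{E}|\psi_{i}w_{i}^{2}-\mathcal{E}\psi_{i}w_{i}^{2}|^{3}=O(p+p^{1/2}\sum_{i}\beta_{i}^{2})=O(p)$, whereas $(\mathcal{Var}\,T^{2}_{0})^{3/2}\asymp p^{3/2}$ because $\sum_{i}\psi_{i}^{2}\asymp p$. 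The Lyapunov ratio is thus $O(p^{-1/2})\to0$, which gives the asymptotic normality.

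Finally I would read off the power. By the $H_{0}$ case of the previous step, the asymptotic level-$\alpha$ test rejects when $(T^{2}_{N}-\sum_{i}\psi_{i})/\sqrt{2\sum_{i}\psi_{i}^{2}}>z_{\alpha}$; standardizing instead by the $H_{1n}$-moments,
\[
\beta(\mbmu)=P_{H_{1n}}\!\left(\frac{T^{2}_{0}-\mathcal{E}_{H_{1n}}T^{2}_{0}}{\sqrt{\mathcal{Var}_{H_{1n}}T^{2}_{0}}}>\frac{z_{\alpha}\sqrt{2\sum_{i}\psi_{i}^{2}}-p^{1/2}\sum_{i}\psi_{i}\beta_{i}^{2}}{\sqrt{2\sum_{i}\psi_{i}^{2}+4p^{1/2}\sum_{i}\psi_{i}^{2}\beta_{i}^{2}}}\right)+o(1).
\]
Dividing numerator and denominator of the threshold by $\sqrt{2\sum_{i}\psi_{i}^{2}}$ and using $\sum_{i}\psi_{i}^{2}/p\to d>0$ together with the bounds $p^{-1/2}\sum_{i}\psi_{i}\beta_{i}^{2}\to0$ and $p^{-1/2}\sum_{i}\psi_{i}^{2}\beta_{i}^{2}\to0$ recorded just before the theorem, the denominator factor tends to $1$ and the subtracted term is $\approx(2d)^{-1/2}\sum_{i}\psi_{i}\beta_{i}^{2}$, so the threshold is $\approx z_{\alpha}-(2d)^{-1/2}\sum_{i}\psi_{i}\beta_{i}^{2}$; combining with the $H_{1n}$ central limit theorem and $1-\Phi(x)=\Phi(-x)$ yields $\beta(\mbmu)\approx\Phi\!\left(-z_{\alpha}+(2d)^{-1/2}\sum_{i=1}^{p}\psi_{i}\beta_{i}^{2}\right)$.

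The main obstacle is the central limit theorem in the second step: one must treat a genuine triangular array of weighted non-central chi-squares and argue quantitatively that the non-centralities $\theta_{i}^{2}=p^{1/2}\beta_{i}^{2}$ (individually as large as $O(p^{1/2})$) cannot overwhelm the order-$p$ variance --- and this is precisely where the summability $\mbdelta^{\top}\mbSigma^{-1}\mbdelta<\infty$ built into the local alternative (3.10), together with the uniform spectral bounds on $\mbSigma_{2}$, is essential. By contrast, the deterministic rotation ${\bf G}$ produced by the compact-group argument preceding Theorem~2 is harmless, since it enters only through the eigenvalues $\psi_{i}$ of $\mbSigma_{2}$ and the quantities $\sum_{i}\psi_{i}\beta_{i}^{2}$, $d$.
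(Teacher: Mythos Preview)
Your proposal is correct and follows essentially the same route as the paper: reduce $T^{2}_{N}$ to $T^{2}_{0}=\sum_{i}\psi_{i}w_{i}^{2}$ via Theorem~2, compute the null and local-alternative moments from (3.8)--(3.9), standardize, and use $p^{-1/2}\sum_{i}\psi_{i}\beta_{i}^{2}\to 0$, $p^{-1/2}\sum_{i}\psi_{i}^{2}\beta_{i}^{2}\to 0$, and $p^{-1}\sum_{i}\psi_{i}^{2}\to d$ to collapse the threshold to $z_{\alpha}-(2d)^{-1/2}\sum_{i}\psi_{i}\beta_{i}^{2}$. The only substantive difference is that the paper simply asserts the normal approximation for $(T^{2}_{0}-\mathcal{E}T^{2}_{0})/\sqrt{\mathcal{Var}\,T^{2}_{0}}$, whereas you supply an explicit Lyapunov verification; this is a welcome addition rather than a different method. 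One small correction: for $w_{i}\sim N(\theta_{i},1)$ the third absolute central moment of $w_{i}^{2}$ is $O(1+|\theta_{i}|^{3})$, not $O(1+\theta_{i}^{2})$, since $w_{i}^{2}-(1+\theta_{i}^{2})=2\theta_{i}Z+Z^{2}-1$; but because $\sum_{i}|\theta_{i}|^{3}=p^{3/4}\sum_{i}|\beta_{i}|^{3}\le p^{3/4}(\sum_{i}\beta_{i}^{2})^{3/2}=O(p^{3/4})$, the Lyapunov sum is still $O(p)$ against $(\mathcal{Var}\,T^{2}_{0})^{3/2}\asymp p^{3/2}$, so your conclusion $O(p^{-1/2})\to 0$ stands.
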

	\begin{proof}
		Under the null hypothesis, we may note that ${\mathcal E} T^{2}_{0}=\sum_{i=1}^{p}\psi_{i}$ and ${\mathcal Var}
		T^{2}_{0}=2\sum_{i=1}^{p}\psi^{2}_{i}$. Thus by Theorem~2 as $n \to \infty$ we have
		\begingroup
		\allowdisplaybreaks
		\begin{align}
			&P\left\{\dfrac{T^{2}_{N}-{\mathcal E} T^{2}_{N}}{\sqrt{{\mathcal Var} T^{2}_{N}}}\geq z_{{\alpha }}\Biggr|H_{0}\right\} \\[1.5pt] \nonumber
			\approx {}& P\left\{\dfrac{T^{2}_{0}-{\mathcal E} T^{2}_{0}}{\sqrt{{\mathcal Var} T^{2}_{0}}}\geq z_{{\alpha }}\Biggr|H_{0}\right\}\\[1.5pt]\nonumber
			= {}& P\left\{\dfrac{T^{2}_{0}-\sum_{i=1}^{p}\psi_{i}}{\sqrt {2\sum_{i=1}^{p}\psi^{2}_{i}}}\geq  z_{\alpha}\right\} \\\quad\nonumber
			\approx {}& 1-\Phi(z_{{\alpha}}) \\[1.5pt] \nonumber
			= {}& \Phi(-z_{{\alpha}})\\[1.5pt] \nonumber
			= {}& \alpha. \nonumber 
		\end{align}
		\endgroup
		\lhead[\footnotesize\thepage\fancyplain{}\leftmark]{}\rhead[]{\fancyplain{}\rightmark\footnotesize\thepage}
		\noindent And hence, under the sequence of local alternatives $H_{1n}$ we then have
		\begingroup
		\allowdisplaybreaks
		\begin{align}~\label{eq:3.10}
			&P\left\{\dfrac{T^{2}_{N}-{\mathcal E} T^{2}_{N}}{\sqrt{{\mathcal Var} T^{2}_{N}}}\geq z_{{\alpha }}\Biggr|H_{1n}\right\}\\[2pt] \nonumber
			\approx {}& P\left\{ \dfrac{T^{2}_{0}-\sum_{i=1}^{p}\psi_{i}}{\sqrt {2\sum_{i=1}^{p}\psi^{2}_{i}}} \geq z_{{\alpha }}\Biggr|H_{1n} \right\} \\[2pt] \nonumber
			={}& P\left\{ \dfrac{T^{2}_{0}-(\sum_{i=1}^{p}\psi_{i}+p^{1/2}\sum_{i=1}^{p}\psi_{i}\,\beta^{2}_{i})}
			{\sqrt {2\sum_{i=1}^{p}\psi^{2}_{i}}} \geq z_{{\alpha }}-\frac{p^{1/2}\sum_{i=1}^{p}\psi_{i}\,\beta^{2}_{i}}
			{\sqrt {2\sum_{i=1}^{p}\psi^{2}_{i}}}\Biggr|H_{1n}\right\} \\[2pt] \nonumber
			\approx {}&  \Phi \, \Biggr(-z_{\alpha}{\sqrt {\left(1+2\dfrac{p^{1/2}\sum_{i=1}^{p}\psi^{2}_{i}\beta^{2}_{i}}{\sum_{i=1}^{p}\psi^{2}_{i}}\right)^{-1}}}+
			\frac{p^{1/2}\sum_{i=1}^{p}\psi_{i}\,\beta^{2}_{i}}
			{\sqrt {2{\sum_{i=1}^{p}\psi^{2}_{i}+4p^{1/2}\sum_{i=1}^{p}\psi^{2}_{i}\beta^{2}_{i}}}}\Biggr) \\[2pt]\nonumber
			\to {}&  \Phi  \, \Biggr(-z_{\alpha}+\frac{p^{1/2}\sum_{i=1}^{p}\psi_{i}\,\beta^{2}_{i}}
			{\sqrt {2{\sum_{i=1}^{p}\psi^{2}_{i}+4p^{1/2}\sum_{i=1}^{p}\psi^{2}_{i}\beta^{2}_{i}}}}\Biggr)\\[2pt] \nonumber
			={}& \Phi \, \Biggr(-z_{\alpha}+\dfrac{\sum_{i=1}^{p}\psi_{i}\,\beta^{2}_{i}}
			{\sqrt{2p^{-1}\sum_{i=1}^{p}\psi^{2}_{i}+4p^{-1/2}\sum_{i=1}^{p}\psi^{2}_{i}\beta^{2}_{i}}}\Biggr)\\[2pt] \nonumber
			\to{}& \Phi \, \Biggr(-z_{\alpha}+\dfrac{\sum_{i=1}^{p}\psi_{i}\,\beta^{2}_{i}}{\sqrt{2d}}\Biggr).
		\end{align}
		\endgroup
	\end{proof}	
	\par
	Generally, the applications of Theorem~3, it needs the consistent estimators of $\psi_{i},~i=1,\ldots,p$. When $\mbSigma={\bf I}$, then $\psi_{i}=a^{or}_{i}$, which can be consistently estimated by 
	$\widehat{\phi^{*}_{i,p}}^{-1},i=1,\ldots,p$. Hence we have the following.	
	\begin{corollary}
		Under the assumptions of Theorem~2 and under $\mbox{H}_{0}$, when $\mbSigma={\bf I}$, we have that
		$\dfrac{T^{2}_{N}-\sum_{i=1}^{p}\widehat{\psi}_{i}}{\sqrt{2\sum_{i=1}^{p}\widehat{\psi_{i}}^{2}}}=
		\dfrac{T^{2}_{N}-\sum_{i=1}^{p}\widehat{\phi^{*}_{i,p}}^{-1}}{\sqrt{2\sum_{i=1}^{p}\widehat{\phi^{*}_{i,p}}^{-2}}}
		\longrightarrow \mbox{N}(0,1).$
	\end{corollary}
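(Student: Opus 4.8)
The plan is to deduce the statement from the classical central limit theorem for a $\chi^{2}_{p}$ variable, and then to pass from the idealized (deterministic) centering and scaling to the estimated ones by a Slutsky argument. First I would pin down the null distribution of $T^{2}_{N}$ in the isotropic case. When $\mbSigma={\bf I}$ the computation preceding Theorem~1 gives $\mbGamma^{*}={\bf I}$, so by (3.6) $\psi_{i}=a^{or}_{i}=1$ for every $i$; Theorem~1 then says $T^{2}_{N}$ is asymptotically equivalent in distribution to $\chi^{2}_{p}(n\mbmu^{\top}\mbmu)$, and under $H_{0}$ the noncentrality $n\mbmu^{\top}\mbmu$ is $0$, so $T^{2}_{N}$ is asymptotically $\chi^{2}_{p}$. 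Since $(\chi^{2}_{p}-p)/\sqrt{2p}\Rightarrow N(0,1)$, we get
\begin{align*}
\frac{T^{2}_{N}-p}{\sqrt{2p}}\ \stackrel{\mathscr{D}}{\Longrightarrow}\ N(0,1)\qquad\text{as }p\to\infty ,
\end{align*}
which is the corollary with the idealized centering $\sum_{i=1}^{p}\psi_{i}=p$ and scaling $2\sum_{i=1}^{p}\psi_{i}^{2}=2p$.

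Next I would upgrade the constants $p=\sum_{i=1}^{p}\psi_{i}$ and $2p=2\sum_{i=1}^{p}\psi_{i}^{2}$ to the statistics $\sum_{i=1}^{p}\widehat\psi_{i}$ and $2\sum_{i=1}^{p}\widehat\psi_{i}^{2}$ with $\widehat\psi_{i}:=\widehat{\phi^{*}_{i,p}}^{-1}$, which makes the two ratios in the statement literally the same object. It suffices to show that, in probability as $p\to\infty$,
\begin{align*}
\frac1p\sum_{i=1}^{p}\widehat\psi_{i}^{2}\to1
\qquad\text{and}\qquad
\frac{1}{\sqrt p}\Bigl(\sum_{i=1}^{p}\widehat\psi_{i}-p\Bigr)\to0 .
\end{align*}
Granting this, decompose
\begin{align*}
\frac{T^{2}_{N}-\sum_{i=1}^{p}\widehat\psi_{i}}{\sqrt{2\sum_{i=1}^{p}\widehat\psi_{i}^{2}}}
=\frac{T^{2}_{N}-p}{\sqrt{2p}}\cdot\sqrt{\frac{p}{\sum_{i=1}^{p}\widehat\psi_{i}^{2}}}
-\frac{\sum_{i=1}^{p}\widehat\psi_{i}-p}{\sqrt{2\sum_{i=1}^{p}\widehat\psi_{i}^{2}}} ,
\end{align*}
and apply Slutsky's theorem: the first factor converges in distribution to $N(0,1)$, the square root converges to $1$, and the last term converges to $0$, so the left-hand side converges to $N(0,1)$, which is the corollary.

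The genuine obstacle is the second of the two displayed limits. Proposition~4.3 of Ledoit and Wolf~[14] gives $\|\widehat{\mbPhi}^{*-1}(\mbLambda)-\mbGamma^{*-1}\|_{F}^{2}=p^{-1}\sum_{i=1}^{p}(\widehat\psi_{i}-1)^{2}\to0$ a.s.\ (here $\mbGamma^{*-1}={\bf I}$ because $\mbSigma={\bf I}$), and Cauchy--Schwarz then forces the cross term in $p^{-1}\sum_{i=1}^{p}\widehat\psi_{i}^{2}=1+2p^{-1}\sum(\widehat\psi_{i}-1)+p^{-1}\sum(\widehat\psi_{i}-1)^{2}$ to vanish, so the first limit is automatic. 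For the second limit, however, Cauchy--Schwarz only yields $p^{-1/2}\bigl|\sum_{i=1}^{p}(\widehat\psi_{i}-1)\bigr|\le\sqrt p\,\|\widehat{\mbPhi}^{*-1}(\mbLambda)-\mbGamma^{*-1}\|_{F}$, whose right-hand side need not go to zero; the rescaled Frobenius consistency controls the errors only in an $L_{2}$-average sense, not strongly enough to replace the centering constant. What closes the gap is the uniform, per-eigenvalue rate used in Section~3.2, namely $\max_{1\le i\le p}\bigl|\widehat{\phi^{*}_{i,p}}^{-1}-\gamma^{*-1}_{i,p}\bigr|=O_{p}(p^{-1})$ (the normalization $\widehat{\mbPhi}^{*-1}(\mbLambda)-\mbGamma^{*-1}=p^{-1}{\bf I}$ with ``$\gamma=1$''), which bounds $\bigl|\sum_{i=1}^{p}(\widehat\psi_{i}-1)\bigr|\le p\max_{i}|\widehat\psi_{i}-1|=O_{p}(1)=o_{p}(\sqrt p)$ and simultaneously gives $p^{-1}\sum(\widehat\psi_{i}-1)^{2}=O_{p}(p^{-2})$. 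So the crux is to justify the uniform, rather than merely Frobenius-averaged, consistency of the Ledoit--Wolf nonlinear shrinkage estimator when $\mbSigma={\bf I}$; once that is in hand, the corollary follows from the routine Slutsky computation above.
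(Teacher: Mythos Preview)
Your approach is correct and aligns with the paper's, which offers essentially no proof beyond the one-sentence remark ``When $\mbSigma={\bf I}$, then $\psi_{i}=a^{or}_{i}$, which can be consistently estimated by $\widehat{\phi^{*}_{i,p}}^{-1}$'' and an implicit appeal to the null case of Theorem~3. Your write-up simply makes explicit what the paper leaves tacit: the $\chi^{2}_{p}$ CLT together with a Slutsky replacement of $(\sum\psi_i,\sum\psi_i^2)=(p,p)$ by $(\sum\widehat\psi_i,\sum\widehat\psi_i^2)$, and you correctly flag that mere Frobenius consistency from Ledoit--Wolf's Proposition~4.3 does not by itself control the centering term $p^{-1/2}\sum(\widehat\psi_i-1)$---the same heuristic uniform rate $\widehat{\mbPhi}^{*-1}(\mbLambda)-\mbGamma^{*-1}=p^{-1}{\bf I}$ adopted in Section~3.2 is what the paper is relying on here as well.
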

	\noindent Thus, when $\mbSigma={\bf I}$ the quantity $\left(T^{2}_{N}-\sum_{i=1}^{p}\widehat{\psi}_{i}\right)\bigg/\sqrt{2\sum_{i=1}^{p}\widehat{\psi_{i}}^{2}}$ is completely data-driven.\\
	\indent Let ${\bf D}={\mbGamma}{\mbGamma}^{*-1}$, write ${\bf D}$=diag$(d_{1},\ldots,d_{p})$. 
	The weight $d_i$ is the ratio of $i$th eigenvalues of two covariance matrices 
	${\mbSigma}$ and ${\mbSigma}_{1}$, i.e., 
	$d_{i}=\gamma_{i, p}/ \gamma^{*}_{i, p}=\gamma_{i,p} a^{or}_{i},~i=1,\dots, p$. 
	Then we may note that $0 < a^{or}_{i} < \infty$ 
	and hence $0 < d_{i} < \infty,\forall i=1,\ldots,p$.\\
	\indent When ${\bf G}={\bf I}$, (i.e.,${\bf V}_{1}={\bf V}$), 
	then $\mbSigma_{2}$=${\bf V}{\bf D}{\bf V}^{\top}$, $\mbPsi(\mbGamma)={\bf D}$ 
	and ${\bf V}_{2}={\bf V}$. Hence we have the following.
	\begin{corollary}
		For the hypothesis testing problem (1.1), under the assumptions of Theorem~2, 
		if ${\bf G}={\bf I}$ (i.e., ${\bf U} ~\mbox{converges to } {\bf V}$ a.s., as $n \to \infty$), then the asymptotically 
		local power of ~$T^{2}_{N}$ is $	\beta(\mbmu) \approx \Phi \left(-z_{{\alpha }}+(2d)^{-1/2}
		\sum_{i=1}^{p}d_{i}\,\beta^{2}_{i}\right) = \Phi\left(-z_{{\alpha }}+(2d)^{-1/2}{\mbdelta}^{\top}
		{\mbSigma^{-1}_{1}}{\mbdelta}\right).$
	\end{corollary}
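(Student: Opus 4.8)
The plan is to obtain Corollary~2 as an immediate specialization of Theorem~3, using the reductions already recorded in Section~3.3 for the case ${\bf G}={\bf I}$. First I would spell out what the hypothesis ${\bf G}={\bf I}$ does to the ingredients of Theorem~3. Since ${\bf G}={\bf I}$ is the same as ${\bf V}_{1}={\bf V}$, we have $\mbSigma_{1}={\bf V}\mbGamma^{*}{\bf V}^{\top}$, hence
\[
\mbSigma_{2}=\mbSigma^{1/2}{\bf G}\mbSigma_{1}^{-1}{\bf G}^{\top}\mbSigma^{1/2}={\bf V}\mbGamma^{1/2}\mbGamma^{*-1}\mbGamma^{1/2}{\bf V}^{\top}={\bf V}{\bf D}{\bf V}^{\top},
\]
because $\mbGamma$, $\mbGamma^{*}$ and their powers are diagonal and therefore commute, where ${\bf D}=\mbGamma\mbGamma^{*-1}=\mbox{diag}(d_{1},\ldots,d_{p})$ with $d_{i}=\gamma_{i,p}a^{or}_{i}$ as defined just above the corollary. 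Thus ${\bf V}_{2}={\bf V}$, $\mbPsi(\mbGamma)={\bf D}$, and in particular $\psi_{i}=d_{i}$ for $i=1,\ldots,p$. Since $0<a^{or}_{i}<\infty$ forces $0<d_{i}<\infty$, the constant $d=\lim_{p\to\infty}p^{-1}\sum_{i=1}^{p}\psi_{i}^{2}=\lim_{p\to\infty}p^{-1}\sum_{i=1}^{p}d_{i}^{2}$ of Theorem~3 is exactly the positive constant appearing in the statement of the corollary (its finiteness and positivity being secured by assumptions A1--A4 together with the bounds $0<d_{i}<\infty$).

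Next I would substitute $\psi_{i}=d_{i}$ into the conclusion of Theorem~3, which yields at once $\beta(\mbmu)\approx\Phi\bigl(-z_{\alpha}+(2d)^{-1/2}\sum_{i=1}^{p}d_{i}\,\beta^{2}_{i}\bigr)$, the first displayed expression of the corollary. It then only remains to put the term $\sum_{i=1}^{p}d_{i}\,\beta^{2}_{i}$ into closed form. Reading it as the quadratic form $\mbbeta^{\top}\mbPsi(\mbGamma)\mbbeta$ and using the definition of $\mbbeta$ together with the diagonalization $\mbSigma_{2}={\bf V}_{2}\mbPsi(\mbGamma){\bf V}_{2}^{\top}$ from Section~3.3, one obtains
\begin{align*}
\sum_{i=1}^{p}d_{i}\,\beta^{2}_{i}
&=\mbbeta^{\top}\mbPsi(\mbGamma)\mbbeta
=(\mbSigma^{-1/2}\mbdelta)^{\top}\mbSigma_{2}(\mbSigma^{-1/2}\mbdelta)\\
&=(\mbSigma^{-1/2}\mbdelta)^{\top}\mbSigma^{1/2}\mbSigma_{1}^{-1}\mbSigma^{1/2}(\mbSigma^{-1/2}\mbdelta)
=\mbdelta^{\top}\mbSigma_{1}^{-1}\mbdelta,
\end{align*}
where the last two steps use the ${\bf G}={\bf I}$ form $\mbSigma_{2}=\mbSigma^{1/2}\mbSigma_{1}^{-1}\mbSigma^{1/2}$ and the identity $\mbSigma^{-1/2}\mbSigma^{1/2}={\bf I}$ for the symmetric square root. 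This gives the second displayed expression and finishes the argument. One may also check the identity spectrally: with ${\bf V}_{2}={\bf V}$ one has $\mbGamma^{-1/2}{\bf D}\mbGamma^{-1/2}=\mbGamma^{*-1}$, so that $\mbbeta^{\top}{\bf D}\mbbeta=\mbdelta^{\top}{\bf V}\mbGamma^{*-1}{\bf V}^{\top}\mbdelta=\mbdelta^{\top}\mbSigma_{1}^{-1}\mbdelta$.

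Since the entire analytic content --- the asymptotic normality of $T^{2}_{N}$ under $H_{1n}$ and the passage to the limiting power --- is already carried by Theorem~3, there is no real obstacle here: the corollary is essentially a change-of-basis identity. The only point that needs a word of care is verifying that the constant $d$ in Theorem~3 is the one written in the corollary, i.e.\ that the substitution $\psi_{i}\mapsto d_{i}$ is legitimate and that $p^{-1}\sum_{i=1}^{p}d_{i}^{2}$ converges; this is where the computation $\mbSigma_{2}={\bf V}{\bf D}{\bf V}^{\top}$ and the bounds $0<d_{i}<\infty$ are invoked. Beyond that, the proof is just the short substitution and quadratic-form computation displayed above, which I would present essentially verbatim.
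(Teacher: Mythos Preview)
Your proposal is correct and follows exactly the paper's approach: the paper records, immediately before the corollary, that ${\bf G}={\bf I}$ gives $\mbSigma_{2}={\bf V}{\bf D}{\bf V}^{\top}$, $\mbPsi(\mbGamma)={\bf D}$ and ${\bf V}_{2}={\bf V}$, and then states the corollary as a direct consequence of Theorem~3. Your write-up simply makes explicit the substitution $\psi_{i}=d_{i}$ and the quadratic-form identity $\sum_{i}d_{i}\beta_{i}^{2}=\mbdelta^{\top}\mbSigma_{1}^{-1}\mbdelta$, which the paper leaves to the reader.
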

	\indent The statistic $T^{2}_{N}$ asymptotically reduces to non-central chi-square distributed when $d_{i}=1,~\forall i=1,\ldots,p$,
	(i.e., $\mbSigma_{1}=\mbSigma$).
	\begin{corollary} 
		For the hypothesis testing problem (1.1), under the assumptions of Theorem~2,
		if ${\bf D}={\bf I}$, then the proposed $T^{2}_{N}$-test is asymptotically optimal.
	\end{corollary}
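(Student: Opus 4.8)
The plan is to turn the hypothesis ${\bf D}={\bf I}$ into the statement that the limiting statistic $T^{2}_{0}$ of (3.6) is literally the infeasible Hotelling-type statistic $n{\mXbar}^{\top}{\mbSigma}^{-1}{\mXbar}$ built from the true precision matrix, and then to read off from Theorem~3 that the two have the same asymptotic local power. Since $n{\mXbar}^{\top}{\mbSigma}^{-1}{\mXbar}$ is the power envelope for problem~(1.1), coincidence of the two power functions gives asymptotic relative efficiency one relative to this envelope, which is the sense of ``asymptotically optimal'' used in the efficiency comparisons of Section~4.

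First I would note that ${\bf D}={\mbGamma}{\mbGamma}^{*-1}={\bf I}$ forces $\gamma^{*}_{i,p}=\gamma_{i,p}$ for every $i$, hence ${\mbGamma}^{*}={\mbGamma}$ and ${\mbSigma}_{1}={\bf V}{\mbGamma}^{*}{\bf V}^{\top}={\mbSigma}$. In the setting already in force, ${\bf G}={\bf I}$ (equivalently ${\bf V}_{1}={\bf V}$), this gives ${\mbSigma}_{2}={\mbSigma}^{1/2}{\bf G}{\mbSigma}_{1}^{-1}{\bf G}^{\top}{\mbSigma}^{1/2}={\bf I}$, so that ${\mbPsi}({\mbGamma})={\bf I}$, i.e. $\psi_{i}=1$ for all $i$ and $d=\lim_{p\to\infty}p^{-1}\sum_{i=1}^{p}\psi_{i}^{2}=1$. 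Consequently $T^{2}_{0}={\bf Z}^{\top}{\mbSigma}_{2}{\bf Z}={\bf Z}^{\top}{\bf Z}=n{\mXbar}^{\top}{\mbSigma}^{-1}{\mXbar}\sim\chi^{2}_{p}(n{\mbmu}^{\top}{\mbSigma}^{-1}{\mbmu})$, and by Theorem~2 the statistic $T^{2}_{N}$ is asymptotically equivalent to it. Also $\sum_{i=1}^{p}\psi_{i}\beta_{i}^{2}=\sum_{i=1}^{p}\beta_{i}^{2}=\|{\mbbeta}\|^{2}={\mbdelta}^{\top}{\mbSigma}^{-1}{\mbdelta}$ since ${\mbbeta}={\bf V}_{2}{\mbSigma}^{-1/2}{\mbdelta}$ and ${\bf V}_{2}^{\top}{\bf V}_{2}={\bf I}$; substituting $\psi_{i}=1$ and $d=1$ into Theorem~3 (equivalently Corollary~3 with $d_{i}=1$) gives
\[
\beta(\mbmu)\approx\Phi\!\left(-z_{\alpha}+\frac{1}{\sqrt{2}}\,{\mbdelta}^{\top}{\mbSigma}^{-1}{\mbdelta}\right).
\]

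Next I would verify that this limit is exactly the asymptotic local power of the oracle test statistic $T^{2}_{*}=n{\mXbar}^{\top}{\mbSigma}^{-1}{\mXbar}$: under $H_{1n}$ one has $T^{2}_{*}\sim\chi^{2}_{p}(\delta_{p}^{2})$ with $\delta_{p}^{2}=p^{1/2}{\mbdelta}^{\top}{\mbSigma}^{-1}{\mbdelta}=o(p)$, and the same normal approximation used for $T^{2}_{0}$ in the proof of Theorem~3 (mean $p+\delta_{p}^{2}$, variance $2p+4\delta_{p}^{2}$) shows that the size-$\alpha$ test rejecting when $(T^{2}_{*}-p)/\sqrt{2p}\ge z_{\alpha}$ has power tending to the displayed quantity as $p\to\infty$. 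Since $n{\mXbar}^{\top}{\mbSigma}^{-1}{\mXbar}$ is the likelihood ratio statistic for (1.1) when ${\mbSigma}$ is known and, after sphering by ${\mbSigma}$, yields the uniformly most powerful invariant test under the orthogonal group, no test for the unknown-${\mbSigma}$ problem can asymptotically exceed this power; equality of the two asymptotic local power functions therefore says that the asymptotic relative efficiency of $T^{2}_{N}$ relative to the oracle test equals one, i.e. $T^{2}_{N}$ is asymptotically optimal. Specializing to ${\mbSigma}={\bf I}$, where ${\bf D}={\bf I}$ holds automatically, recovers Theorem~1 as a consistency check.

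The step I expect to be the main obstacle is pinning down the optimality benchmark rigorously: one must argue that feasibility costs nothing asymptotically, namely that once ${\bf U}\to{\bf V}$ so that $\widehat{\mbSigma}^{-1}_{LW}\to{\mbSigma}^{-1}_{1}$ and ${\bf D}={\bf I}$ makes ${\mbSigma}_{1}={\mbSigma}$, the estimated-precision test $T^{2}_{N}$ and the known-precision test $T^{2}_{*}$ are asymptotically indistinguishable in the sense of (3.7), and one must state precisely the class of competitors over which the known-${\mbSigma}$ test is the power envelope. The remaining ingredients, namely the validity of the $N(0,1)$ approximation to $\chi^{2}_{p}(\delta_{p}^{2})$ over the relevant range of non-centralities (Lindeberg--Feller, exactly as in the proof of Theorem~3) and the algebraic identities above, are routine.
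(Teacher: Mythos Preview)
Your approach is essentially the paper's own: the paper treats this corollary as immediate from the sentence preceding it (``$T^{2}_{N}$ asymptotically reduces to non-central chi-square distributed when $d_{i}=1$'') together with Remark~1, and gives no further proof. Your argument simply spells this out---${\bf D}={\bf I}$ forces ${\mbSigma}_{1}={\mbSigma}$, hence ${\mbSigma}_{2}={\bf I}$, $\psi_{i}\equiv 1$, $d=1$, and Theorem~3 collapses to the oracle power $\Phi(-z_{\alpha}+(2)^{-1/2}{\mbdelta}^{\top}{\mbSigma}^{-1}{\mbdelta})$---which is exactly the reasoning the paper has in mind.

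Two small points. First, you correctly flag that ${\bf G}={\bf I}$ is needed to get ${\mbSigma}_{2}={\bf I}$ from ${\mbSigma}_{1}={\mbSigma}$; the paper's statement only says ``under the assumptions of Theorem~2,'' but the corollary is placed right after Corollary~2 (where ${\bf G}={\bf I}$) and is clearly meant to be read in that context, so your reading is the intended one. Second, where you write ``equivalently Corollary~3 with $d_{i}=1$'' you mean Corollary~2; Corollary~3 is the statement you are proving. Your extra paragraph making the optimality benchmark precise (identifying $n{\mXbar}^{\top}{\mbSigma}^{-1}{\mXbar}$ as the known-${\mbSigma}$ likelihood-ratio/UMPI envelope) goes beyond what the paper supplies, but it is consistent with the ARE discussion in Section~4 and is a welcome clarification rather than a departure.
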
	
		{\bf Remark~1}
		For one thing, if $d=\infty$, then the asymptotic power of $T^{2}_{0}$ is equal to the significant level $\alpha$. 
		And for another, as ${\mbGamma}={\mbGamma}^{*}$ the diagonal matrix ${\bf D}$ 
		equals to ${\bf I}$, i.e., ${\mbSigma}_{1}={\mbSigma}$.
		If ${\bf D}={\bf I}$ (i.e., $d=1$) the proposed test $T^{2}_{N}$ has the asymptotically optimal power property. 
		Moreover, we may note that the asymptotic distribution of the optimal test statistic 
		is non-central $\chi^{2}$ distributed.
		As a result, the key point to obtain the asymptotically optimal Hotelling's type test 
		is to use the consistent estimator of ${\mbSigma}$.
	\indent Note that we assume that $n > p$, then $\bf S$ is the MLE of $\mbSigma$. 
	And hence ${\bf S} \to \mbSigma$ a.s. when $p$ is fixed and $n \to \infty$ (i.e., $c=0$). 
	Thus Hotelling's $T^{2}$-test statistic in (1.3) converges to 
	$n\overline{\bf X}^{\top}\mbSigma^{-1}\overline{\bf X}$ in probability as $n \to \infty$. 
	However, it may not be true when $c \in (0,1)$ due to the inconsistency of sample eigenvalues. 
	By Theorem 3 and Remark 1, we have the following
	\begin{corollary} 
		For the hypothesis testing problem (1.1), the Hotelling's $T^{2}$-test is 
		asymptotically optimal when $c=0$. However, it is not asymptotically optimal when $c \in (0,1)$. 
	\end{corollary}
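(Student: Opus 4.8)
The plan is to dispatch the two regimes separately, reducing each to whether the diagonal ratio matrix ${\bf D}={\mbGamma}{\mbGamma}^{*-1}$ of Corollaries~2--3 is a scalar multiple of the identity. By Theorem~3, Corollary~3 and Remark~1, a Hotelling-type statistic $n{\mXbar}^{\top}M{\mXbar}$ is asymptotically optimal precisely when the matrix $M$ it effectively employs is asymptotically proportional to ${\mbSigma}^{-1}$, equivalently when its limiting law under the local alternative $H_{1n}$ is the non-central $\chi^{2}_{p}$ of the oracle statistic $n{\mXbar}^{\top}{\mbSigma}^{-1}{\mXbar}$; in the notation of Section~3.2 this means ${\bf D}$ scalar and the factor ${\bf G}={\bf I}$. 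The key observation is that Hotelling's $T^{2}$ of (1.3) is the special case of the decomposite statistic (3.1) obtained by taking $\widehat{\mbPhi}^{*}({\mbLambda})=\widehat{\mbPhi}_{2}({\mbLambda})=\mbox{diag}(\lambda_{1,p},\ldots,\lambda_{p,p})$, i.e.\ by using ${\bf S}^{-1}={\bf U}{\mbLambda}^{-1}{\bf U}^{\top}$ in place of $\widehat{\mbSigma}^{-1}_{LW}$, so that Theorems~2--3 apply verbatim with $\gamma^{*}_{i,p}=1/a^{or}_{i}$ replaced by the limiting sample eigenvalue $\lambda_{i}=F^{-1}(i/p)$.

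For $c=0$, $p$ is fixed and $n>p$, so ${\bf A}=(n-1){\bf S}$ is Wishart, ${\bf S}$ is the MLE of ${\mbSigma}$, and by consistency of the MLE under the Wishart density one has ${\bf S}\to{\mbSigma}$, hence ${\bf U}\to{\bf V}$ and ${\mbLambda}\to{\mbGamma}$, a.s.\ as $n\to\infty$. Thus ${\bf G}={\bf I}$, ${\mbGamma}^{*}={\mbGamma}$, ${\mbSigma}_{1}={\mbSigma}$ and ${\bf D}={\bf I}$; equivalently $T^{2}\to n{\mXbar}^{\top}{\mbSigma}^{-1}{\mXbar}$ in probability, the oracle statistic, whose law under $H_{1n}$ is the non-central $\chi^{2}_{p}$. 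By Corollary~3 (or the last sentence of Remark~1) Hotelling's test is asymptotically optimal.

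For $c\in(0,1)$, ${\bf S}$ is no longer a consistent estimator of ${\mbSigma}$, and the obstruction is already visible in the eigenvalues, so the eigenvector ambiguity ${\bf G}$ (which, if nontrivial, only worsens the test) is immaterial, ${\bf D}$ depending on eigenvalue arrays alone. By the Mar{\v c}enko--Pastur equation (2.5) the limiting sample spectral distribution $F$ satisfies $m_{F}(z)=\int(\gamma[1-c-czm_{F}(z)]-z)^{-1}\,dH(\gamma)$; plugging in $m_{F}=m_{H}$ forces $c=0$ (for ${\mbSigma}={\bf I}$ the identity collapses to $0=-c$, and a like contradiction persists for every $H$ admissible under A4). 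Hence $\lambda_{i}\neq\gamma_{i,p}$ asymptotically, ${\mbGamma}^{*}_{\mathrm{Hot}}=\mbox{diag}(\lambda_{1},\ldots,\lambda_{p})$ is not proportional to ${\mbGamma}$, and ${\bf D}_{\mathrm{Hot}}={\mbGamma}{\mbGamma}^{*-1}_{\mathrm{Hot}}$ is not scalar. By Corollary~3 and Remark~1 this already rules out asymptotic optimality; feeding the associated weights $\psi_{i}$ into Theorem~3 further shows the local power $\Phi\!\left(-z_{\alpha}+(2d)^{-1/2}\sum_{i}\psi_{i}\beta^{2}_{i}\right)$ is strictly below the oracle local power $\Phi\!\left(-z_{\alpha}+2^{-1/2}{\mbdelta}^{\top}{\mbSigma}^{-1}{\mbdelta}\right)$ of Corollary~2 (with ${\bf D}={\bf I}$) along a suitable sequence of directions ${\mbdelta}$. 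This dovetails with Theorem~1: for ${\mbSigma}={\bf I}$ and $c>0$ the Ledoit--Wolf correction restores the eigenvalues to $1$, so $T^{2}_{N}$ stays optimal, whereas Hotelling's $T^{2}$ leaves the Mar{\v c}enko--Pastur spread of the unit eigenvalues uncorrected and is not.

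I expect the inconsistency step for $c\in(0,1)$ to be the main obstacle: one must exclude, uniformly over all population spectra permitted by A4, a fixed point $F=H$ of the Mar{\v c}enko--Pastur map whenever $c>0$ --- morally obvious because the map strictly spreads the bulk, but warranting a clean argument from (2.5) itself rather than from the ${\mbSigma}={\bf I}$ special case. A secondary technical point is to render precise the implication ``${\bf D}$ not scalar $\Rightarrow$ strictly suboptimal local power'': this reduces, after the normalization used in the proof of Theorem~3, to the Cauchy--Schwarz/rearrangement fact that $\sum_{i}\psi_{i}\beta^{2}_{i}\big/\big(\sum_{i}\psi^{2}_{i}\big)^{1/2}$ is maximized exactly when the $\psi_{i}$ are constant, which one would pursue only far enough to produce one witnessing ${\mbdelta}$ (say concentrated on the eigendirection of the smallest $\psi_{i}$).
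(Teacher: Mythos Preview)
Your proposal is correct and follows the same route as the paper: the paper's justification is simply the short paragraph preceding the corollary, which notes that for $c=0$ the MLE ${\bf S}$ is consistent so $T^{2}\to n{\mXbar}^{\top}{\mbSigma}^{-1}{\mXbar}$ in probability, while for $c\in(0,1)$ the sample eigenvalues are inconsistent, and then invokes Theorem~3 and Remark~1. Your treatment is more thorough --- supplying the Mar{\v c}enko--Pastur fixed-point argument for $F\neq H$ and the Cauchy--Schwarz justification for strict suboptimality --- but these are elaborations the paper itself does not carry out; the underlying logic is identical.
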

		{\bf Remark~2} Generally $\mbSigma_{1}\neq {\mbSigma}$, by Corollary~3 the 
		decomposite test statistic $T^{2}_{N}$, 
		which is based on the optimal orthogonally equivariant estimator 
		$\widehat {\mbSigma}^{-1}_{LW}$ for the precision matrix
		${\mbSigma}^{-1}$, will not be asymptotically optimal for the hypothesis testing problem 
		(1.1). Namely, all the regularized Hotelling's $T^{2}$ type tests are not asymptotically optimal 
		due to the sample eigenvalues inconsistency. As such, to obtain the asymptotically optimal test for  
		the hypothesis testing problem (1.1) without having the structure assumption of covariance, 
		it is necessary to do more modification work with the eigenvalue and eigenvector estimation of 
		population covariance matrix ${\mbSigma}$. Namely, to find out the consistent estimator of $\mbSigma$ 
		when $c \in (0,1)$ is a quite hard work. It remains wide open in the literature. 
	
		{\bf Remark~3} Usually, for the fixed dimensional cases there is no any restriction 
		on the unknown nuisance parameter $\mbSigma$ to 
		establish the asymptotic normality of the test statistics.
		However, for the large dimensional p cases, the asymptotic normality of the test statistics holds 
		either under some restrictions on the unknown nuisance parameter $\mbSigma$ 
		or the case that proposed test is optimal when 
		${\bf D}={\bf I}$. As such, the numerical powers of tests under 
		large dimension situation are not comparable. Because we can only perform those numerical power functions
		under restricted parameter spaces of ${\mbSigma}$, where the asymptotic normality of test statistics holds.
		Those restriction spaces of ${\mbSigma}$ over spaces
		$\{\mbSigma| \mbSigma > {\bf 0}, \mbox{tr}({\mbSigma}^{4})/\mbox{tr}^{2}({\mbSigma}^{2})=o(1)\}$ and
		$\{\mbSigma| \mbSigma > {\bf 0}, \mbox{tr}({\mbGamma}^{4}_{K})=o(\mbox{tr}^{2}({\mbGamma}^{2}_{K}))\}$
		Feng et al. [10], are generally hard to be analytically characterized. Each test may have different restricted
		parameter space to ensure the asymptotic normality of test statistic. Besides, there is no clear way to compare the 
		power functions for those tests beyond restricted spaces.
		To overcome the difficulty, we provide a testing procedure under the local alternative which 
		the dimensionality p is also taken into the consideration. This generalize the fixed dimensional 
		situations into the large dimensional cases. 
		Our proposed test statistics $T^{2}_{N}$ dose not encounter such a disaster mentioned above, as we have discussed in Corollary~3, the optimal convergence estimator of ${\mbSigma}$ will lead
		the corresponding test to be optimal. Thus, to compare the tests for hypothesis testing problem (1.1),
		 it is essential to compare the estimators of ${\mbSigma}$. Random matrix theory will play 
		 an important role in obtaining reasonable
		estimators of population covariance matrix. We will explain this point more clearly through 
		comparisons with the existing tests in Section 4.
	
	\section{The comparison of tests} 
	\subsection{The asymptotic relative efficiency}
	\indent A standard method to compare asymptotic power functions is through asymptotic relative efficiency (ARE) 
	(Pitman [22]), which is essentially defined via large deviation asymptotics. 
	It is well known that the Sanov theorem and its generalizations reduce the problem of large deviations 
	to a minimization problem of Kullback-Leibler divergence on the corresponding set of distributions. 
	For any two test statistics which are asymptotic to normal, i.e., $\chi^{2}$
	distributed with noncentralities $\mu^{'}{\bf A}{\mu}$ and $\mu^{'}{\bf B}{\mu}$, respectively. Then the ARE of
	these two tests is equivalent to $\mu^{'}{\bf A}{\mu}/\mu^{'}{\bf B}{\mu}$. Whenever the value of ARE of test $T_{a}$
	relative to test $T_{b}$ is larger than one, then the procedure based on $T_{a}$ is considered to have larger
	asymptotic power than that of the competing test based on $T_{b}$. The test $T_{a}$ has the better 
	asymptotic power than that of test $T_{b}$ if the eigenmatrix of ${\bf A}{\bf B}^{-1}$ is larger than ${\bf I}$. Following the arguments
	as in Case 1, we can easily see that the tests proposed by
	 Dempster [8], [9], Bai and Saranadasa [2], Srivastava and Du [23], 
     Srivastava [24], Chen and Qin [6], Chen et al. [5], Park and Ayyala [21]  and Feng et al. [10] 
     are not optimal for the hypothesis testing problem (1.1) when the dimension is large. 
     Basically, these results can be classified into the following three categories:
		
	\vspace{0.3cm} 
	\indent {\bf Case 1.} Compare the ARE of tests constructed without using the information of correlations. 
	Let ${\bf B}^{-1}{\bf A}= \left[\mbox{tr}(\mbSigma^{2})\right]^{1/2}\textbf{I}\mbSigma^{-1} =
	\sqrt{\mbox{tr}(\mbSigma^{2})}\mbSigma^{-1}.$
	Thus the eigenmatrix of ${\bf B}^{-1}{\bf A}$ is larger than ${\bf I}$.
	Thus we may conclude that the tests proposed by Dempster [8],[9], Bai and Saranadasa [2] are not optimal. 
	Similar arguments by taking ${\bf B}=[\mbox{tr}({\bf R}^{2})]^{-1/2}{\bf D}^{-1}_{0}$, where ${\mbSigma}={\bf D}^{\frac{1}{2}}_{0}{\bf R}{\bf D}^{\frac{1}{2}}_{0}$ with ${\bf D}_{0}=\mbox{diag}(\sigma_{11}, \ldots, \sigma_{pp})$, we may also conclude that tests used the information of diagonal elements of ${\bf S}$, 
	such as Srivastava and Du [23], Srivastava [24], Chen and Qin [6], Park and Ayyala [21]  are not optimal neither.
	
	\vspace{0.3cm}
	\indent {\bf Case 2.} Compare the tests constructed by using some correlations for the estimation of covariance
	matrix. Feng et al. [10] followed Bai and Saranadasa's model assumptions and improved the works of 
	Chen and Qin [6], Park and Ayyala [21] by adding correlations into consideration. They divided the $p$ variables
	into several small parts for invertible covariance matrix and then added those corresponding Hotelling $T^{2}$-test
	statistics up, which is called the composite $ T^{2}$ test. The asymptotic power function of the composite $T^{2}$
	test is of the form
	\vspace{-0.5cm}
	\begin{align}~\label{eq:4.1} 
		\beta_{CT}(\mbmu) \approx \Phi\left(-z_{\alpha}+\dfrac{n\mbmu^\top \mbSigma^{-1}_{\mathcal{O}^{K}}\mbmu}
		{\sqrt{2\mbox{tr}({\bf \Gamma}^{2}_{K})}}\right),
	\end{align}
	where ${\bf \Gamma}_{K} = \mbSigma^{1/2}\mbSigma^{-1}_{\mathcal{O}^{K}}\mbSigma^{1/2}$ 
	and $\mathcal{O}^{K}
	=\{ A^{0}_{1},\ldots, A^{0}_{N}\}$, for the details see Feng et al. [10] (p.1423). To avoid the asymptotic power 
	always being one as $p \to \infty$, some further conditions are needed. Note that under their assumption
	(C3): $\|{\mbmu}\|^{2}=O(n^{-1}p^{1/2})$, then equation (4.1) can be further reduced to that
	$\beta_{CT}(\mbmu)\approx\Phi(-z_{\alpha}+\frac{p^{1/2}{\mbdelta^\top \mbSigma^{-1}_{\mathcal{O}^{K}}\mbdelta}}
	{{\sqrt{2\mbox{tr}({\bf \Gamma}^{2}_{K})}}})$. We may see that the asymptotic power function 
	of composite test becomes
	$\beta_{CT}(\mbmu)=\Phi(-z_{\alpha}+(2d_1)^{-1/2}{\mbdelta^\top \mbSigma^{-1}_{\mathcal{O}^{K}}\mbdelta})$
	 if $\lim_{p\to \infty}\mbox{tr}({\mbGamma}^{2}_{K})/p =d_1$ holds. 
	But, note that $\mbSigma^{-1}_{\mathcal{O}^{K}}$ 
	will not be equal to $\mbSigma^{-1}$ generally. Feng et al. [10] basically made some assumptions on 
	the covariance matrix so that the estimator of covariance matrix having the block diagonal type matrix, 
	thus we may concern that the information may be lost in general.
	Theorem~3 tells us that the composite $T^{2}$ test of Feng et al. [10]
	is not optimal unless that ${\mbGamma}^{2}_{K}={\bf I}$, i.e., $\mbSigma^{-1}_{\mathcal{O}^{K}}=\mbSigma^{-1}$,
	which will not happen in their setup. Again, as in Case 1, we may conclude that there still exists room to
	develop test of more robust and powerful.
	
	\vspace{0.3cm} 
	\indent {\bf Case 3.} Compare the tests constructed by adopting the ridge regression type covariance estimator. 
	Chen et al. [5] imposed some regularizations on the sample covariance matrix and proposed a regularized Hotelling's
	$T^{2}$ statistic (RHT)
	\vskip-0.5cm
	\begin{equation}
		RHT(\lambda)= n\bar{\bf X}^\top \left({\bf S}+\lambda {\bf I}_{p}\right)^{-1}\bar{\bf X},
	\end{equation}
	where $\lambda > 0$. Note that the RHT statistic $n\bar{\bf X}^\top ({\bf S}+\lambda {\bf I}_{p})^{-1}\bar{\bf X}
	=n\bar{\bf X}^{\top}{\bf U}(\mbLambda+\lambda {\bf I})^{-1}{\bf U}^{\top}\bar{\bf X}$, which has the similar form as
	that of the decomposite $T^{2}_{N}$-test statistic. Note that $\lambda_{i, p}+\lambda$ is linear and
	$\lambda$ needs to be estimated. This is related to the Stein type shrinkage estimators. Their estimators of population
	eigenvalues may not be optimal. Ledoit and Wolf [13] studied the best linear estimator of the form
	$a\lambda_{i, p}+b, a, b >0, a+b=1$. Ledoit and Wolf [14] further claimed that the nonlinear estimators
	$\hat a^{or}_{i}$ are better than those of the best linear estimators $a\lambda_{i, p}+b, \forall i=1, \ldots, p$.
	It remains room to improve the estimators of eigenvalues. Ledoit and P{\'e}ch{\'e} [12] used the random
	matrix theory to claim that their nonlinear shrinkage eigenvalues estimator of the precision matrix ${\mbSigma}^{-1}$
	is optimal.
	
	\indent As noted in above, the ARE is based on the quantity of Kullback-Leibler divergence, 
	and the Stein loss function is proportional to the Kullback-Leibler divergence under 
	the multivariate normal setup. As such, the optimal orthogonally equivariance estimator 
	corresponds to the optimal power test. Among the class of orthogonally equivariant
	estimators, the decomposite $T^{2}_{N}$ test statistic digs out the optimal information of eigenvalues of 
	the precision matrix. Ledoit and Wolf [16] expected that their estimator in (2.8) 
	to be close to the inverse population matrix (precision matrix), and at the same time 
	its inverse can also be close to the population covariance matrix. 
	In comparisons with the tests mentioned above, the decomposite $T^{2}_{N}$-test is different from them. 
	We may expect that the decomposite $T^{2}_{N}$-test may perform better than 
	both the RHT proposed by Chen et al. [5] and the composite test proposed by Feng et al. [10]. 
	It is easy to note that the sample eigenvalues are not independent. 
	One of our main goals is to fulfill the hope that more information of population eigenvalues can be 
	digged out via the help of dedicated random matrix theory. 
	
	\subsection{Numerical power comparisons} 
	\indent Via Corollary~2, it is easy to see that the composite $T^{2}$-test of Feng et al. [10] has a similar form of 
	asymptotically local power function as that of the proposed decomposite $T^{2}_{N}$-test. Define the quantity $\dfrac{\mbdelta^{\top}\mbSigma^{-1}_{1}\mbdelta}{\sqrt{2d}} \bigg/ \dfrac{\mbdelta^{\top}\mbSigma^{-1}_{O^{\mathcal{K}}}\mbdelta}{\sqrt{2d_{1}}}$ 
	as the ARE of the decomposite $T^{2}_{N}$-test with respect to the composite $T^{2}$-test. 
	Note that, if the value of ARE is larger than 1, then the decomposite $T^{2}_{N}$-test has greater power than that of 
	the composite $T^{2}$-test.
	We make some simulation studies of power comparisons and AREs  for the decomposite 
	$T^{2}_{N}$-test and the composite $T^{2}$-test based on the intraclass correlation model. Namely, $\mbSigma=(\sigma_{ij})$,
	where $\sigma_{ij} = \rho^{|i-j|},~i=1,\ldots,p,~j=1\ldots,p$; $\rho\in(-1,1),\rho\neq 0$. Without loss of generality,
	we take $c=1/3$, the significance level $\alpha = 0.05$ and $\mbdelta=(\delta_{1},\ldots,\delta_{p})$ with $\delta_{i}\in (-1,,1).~i=1,\ldots,p$.
	When $p=20$, take $K = 2$, while $p=40$, take $K = 4$ in Table~1.
	\begin{table}[H]
		\centering
		\caption{Power comparison and ARE.}
		\resizebox{\textwidth}{!}{
			\begin{tabular}{rSSS|SSS}
				\toprule
				\multirow{2}{*}{} &
				\multicolumn{3}{c}{$p=20,K=2$ } &
				\multicolumn{3}{c}{$p=40,K=4$ } \\
				\midrule
				$\rho$ & {The decomposite} & {The composite}    & {ARE} & {The decomposite}  & {The composite} & {ARE} \\
				&{$T^{2}_{N}-test$}    &{$T^{2}$-test}      &             &{$T^{2}_{N}-test$}      &{$T^{2}$-test}     &              \\
				\midrule
				-0.2  & 0.3695  & 0.3480  &  1.0366  & 0.9615  & 0.9418   & 1.0560 \\
				0.2  & 0.7054  & 0.5893  &  1.1438   & 0.9025  & 0.8730  & 1.0501 \\
				-0.5  & 0.9073  & 0.6952  & 1.3293    & 0.9811   & 0.6447  & 1.7318 \\
				0.5  & 0.8251   & 0.2859  & 2.0758   & 0.9968  & 0.8162  & 1.6397 \\
				-0.8  & 0.9150   & 0.1878   & 3.1023    & 1.0000  & 0.6865  & 5.7759\\
				0.8  & 0.8848  & 0.0466  & 6.1533    & 1.0000 & 0.9759   & 5.4044\\
				\bottomrule
				\label{ARE}
			\end{tabular}
		}
	\end{table}
\vskip-5cm	
\section{Real data analysis}
\indent More than two decades have passed since the founding of the Taipei Rapid Transit Corporation (TRTC) in 1994.
Entering the 2.0 era, the Metro system is complete and is time for further expansion. 
A multi-point transferring model relieves congestion and disperses the current burden of existing transfer stations, 
therefore, providing the public with speedier and better transportation performance and quality. \\	
\indent In order to test whether there is a significant growth in population of public transportation, 
especially commuters  mainly take Taipei Metro System in recently years, we use data gathered from 1 July, 2015 to 30 April,
2020, including 108 stations' exit ridership on record. Since lacking the acknowledgment of distribution of $T^{2}_{N}$, 
we use bootstrap method to conduct the one sample testing problem with significant level $\alpha = 0.01$. 
\vskip-1cm
\subsection{The Bootstrap procedures for calculating \texorpdfstring{$T^{2}_{N}$}{T2N} are as follow:}
\begin{enumerate}\itemsep = -5pt
\item Calculate  column mean vector $\mXbar$ and sample covariance matrix ${\bf S}$ of  data set before resampling. Decomposite ${\bf S}$ into sample eigenvalues $\lambda_{i,p}, i=1,\ldots,p$ and its corresponding eigenvectors $u_{i}, i=1,\ldots,p$. 
\item Calculate ${\widehat \mbSigma}^{-1}_{LW}$ provided by Ledoit and Wolf [14] by using their algorithm of numerical implementation, the QuEST function in Ledoit and Wolf [15].
\item Then $T^{2}_{N}$ can be acquired as $T^{2}_{N} = n{\mXbar}^{\top}{\widehat \mbSigma}^{-1}_{LW}{\mXbar}$. 
\item Repeated random sample $95\%$ of the days from original data set with replacement, record the subset data each time.
\item Calculate  sample covariance matrix ${\bf S}$  and the corresponding $T^{2}_{N}$  for each collect data set. 
\end{enumerate}	
\indent After building up a sampling distribution by computing $T^{2}_{N}$ from 1000 times simulated data under the null hypothesis, we compare the test statistic before resampling to the sampling distribution. The empirical p-value is the proportion in the sampling distribution that are as extreme as the test statistics.\\
\indent We want to test whether there is a difference in mean ridership among stations under the following two cases. Let $\mbmu_{0}=(\mu_{1,0},\ldots,\mu_{108,0})^{\top}$ be the exit ridership mean vector of 108 stations, which is calculated from the second half year from July to December of 2015 as a comparison bench mark for mean testing. And our parameters to test, the exit ridership mean vector of 108 stations is denoted as $\mbmu = \left(\mu_{1},\ldots,\mu_{108}\right)^{\top}$.\\	

\subsection{The effect of Monthly Unlimited Transport Policy}
\indent In 2018, Mayors of Taipei and New Taipei City announced a new unlimited public transportation card, 
called the "All Pass Ticket", 
and is priced at NT$\$$1,280 $(\mbox{US}\$43.83)$ a month. It is released on April 16, 2018, 
and it is a periodical commuter ticket. It is valid for both buses and the Taipei Metro, and also for 
the first 30 minutes of a YouBike ride. Commuters across Taipei and New Taipei City are sure to benefit 
from the policy. Paying NT$\$$1,280 for 30 days unlimited rides works out to an average cost of NT$\$$42 per day.
Taipei Mayor Ke Wen-je said that as always, people are encouraged to use public transportation to help 
combat traffic congestion. On the other hand, New Taipei City Mayor Eric Chu said he hoped the new pass can 
help boost daily ridership in Taipei's public transportation system (March 12, 2018. Central News Agency). 
Hypothesis testing problem of interest is:
\vskip-2cm
\begin{align}~\label{eq:5.2}
H_{0}:{\mbmu_{1280}}={\mbmu}_{0}~\mbox{versus}~H_{1}:{\mbmu_{1280}}\ne {\mbmu}_{0}\;,
\end{align}
\vskip-0.5cm
\noindent where $\mbmu_{1280}$ is the mean vector of stations during the period of policy, 
and $\mbmu_{0}$ is the mean vector as defined before.\\
\indent Here we check the effectiveness of the ``All Pass Ticket" policy by bootstrap resampling process 
based on days from 2017 till 2019 to calculate the test statistic $T^{2}_{N}$, and the  
Hotelling's $T^{2}$-test statistic
shows that there are $4$ 
shuffled statistics out of 1000 less than the value, which is $5292.244$, of $T^{2}_{N}$-test statistic for 
the real data set. No matter what the significance level $\alpha$ is either 0.01 or 0.05, the empirical p-value 
is equal to $0.004$ which is less than $\alpha$. The value $5292.244$ is also less than $0.5\%$ quantile of 
the sampling distribution of value $5302.149$. Meanwhile, the result of using Hotelling's $T^{2}$-test while with empirical p-value being equal to $0$. It seems that there is a significant difference of the mean values in the aspect of exit ridership of each station during the monthly unlimited public transport card policy. \\
\indent For this real data set, by both the decomposite $T^{2}_{N}$-test with empirical p-value 0.04 and Hotelling's $T^{2}$-test with empirical p-value 0, H$_{0}$ in~(5.1) is rejected when the level of significance $\alpha$ is either 0.01 or 0.05. Note that no matter how small the significance level $\alpha$ is, H$_{0}$ is strongly rejected by Hotelling's T$^{2}$-test, with empirical p-value 0. This indicates that the decomposite $T^{2}_{N}$-test has the advantage over Hotelling's T$^{2}$-test by the bootstrap procedure in the analysis of this real data set.

\section{Conclusion and Future Study}
	\indent It is generally hard to compare tests well based on a single index, 
	for there are so many nuisance parameters when the dimension is large. 
	Some other statistical aspects are also needed to be incorporated into consideration 
	for the comparison of tests. $T^{2}_{N}$ 
	defined in (3.1) is constructed by the use of optimal estimators of eigenvalues of 
	the precision matrix as pointed out by Ledoit and Wolf [16]. 
	For there were no much work using these results from the data analysis point of views in the literature, 
	we adopt the permutation test based on good test statistics which may be easy to perform 
	and be robust in practice. Based on the discussions above, it seems reasonable 
	to adopt the decomposite $T^{2}_{N}$ statistic to perform the bootstrap procedure 
	for analyzing large dimensional data sets.\\
	\indent The rotation equivariance property is quite appropriate in the general situation where one has no 
	{\it prior} information about the orientation of the eigenvectors of population covariance matrix. 
	However, without having the consistent estimators of population eigenvalues matrix $\mbGamma$, 
	it is still difficult to perform the test statistic precisely well even under the null hypothesis $H_0$. 
	Those tests incorporated with the information of ${\bf S}$ existing 
	in the literature also face the same difficulty, such as the estimation of ${\mbGamma}^{2}_{K}$ Feng et al. [10]. 
	One of the main goals of this work is to find out more information about population eigenvalues with the 
	help of delicate random matrix theory. As we may note that the joint density function of those 
	dependent eigenvalues is well known for the Wishart ensemble, and it is given by the 
	Mar{\v c}enko-Pastur distribution for a system with large dimension when 
	$c \in (0, 1)$. So the statistical significance of the correlations in the large system can be obtained 
	from the empirical eigenvalue spectrum distribution of the sample covariance matrix via the 
	Mar{\v c}enko-Pastur distribution. This is one of the main advantages of the approach to obtain 
	the consistent eigenvalues and eigenvectors of population counterparts.
	
	\indent  If the matrix ${\bf D}$ is equal to the identity matrix, then our proposed $T_{N}^{2}$-test will be 
	optimal for  the hypothesis testing problem (eq:1.1). In this ideal situation, by Corollary~3 
	we then base on the normalized test statistic $(T^{2}_{N}-p)/\sqrt {2p}$ and usual normal theory to do the 
	work of data analysis. However, this study indicates that both the Stein's estimator (2.1) 
	and the Ledoit and Wolf's estimator (2.8) are not the
	consistent estimators of $\mbSigma$. For the application of principle of analysis, we may remark that 
	it is still open to find out the consistent estimators of population eigenvalues and eigenvectors of 
	$\mbSigma$ in the large dimensional system. At this stage, it may be too optimistic to expect 
	the whole information of $\mbSigma$ can be revealed without any {\it a prior}
	knowledge in its structure. Hence, we put this difficult but important problem as a future study.

\section*{Acknowledgments}
	\noindent The author would like to thank Professors Z.R. Chen and H.N. Hong from National Chiao Tung University, and Professor S.Y. Huang from Academia Sinica for their helpful discussions. 

\section*{References}
\begin{enumerate}
	\item {T.W. Anderson (2003)}, { An Introduction to Multivariate Statistical
		Analysis}. 3rd edition. {\em Wiley, New York}.
	\item {Bai and Saranadasa (1996)}, {Effect of High Dimension: by an Example of a Two Sample Problem}.
	{\em Statistica Sinica}, Vol. 6, No.{\bf 2}, 311--329.		
	\item {Bai, Z.D. and Miao, B.Q. and Yao, J.F. (2003)},
	{Convergence rates of spectral distributions of large sample covariance matrices}.
	{\em SIAM J. Matrix Anal. Appl.}, Vol.25, 105--127.		
	\item {Bai, Z.D. and Miao, B.Q. and Pan, G.M. (2007)},  
	{On asymptotics of eigenvectors of large sample covariance matrix}.
	{\em Ann. Probab.}, Vol.35, 1532--1572.	   	
	\item {Chen, L.S. and Paul, D. and Prentice, R.L. and Wang, P. (2011)},	
	{A Regularized {H}otelling’s ${T}^{2}$-Test for Pathway Analysis in Proteomic Studies}.
	{\em J. Am. Stat. Assoc.}, Vol.106, No.{\bf 496}, 1345--1360.	
	\item{Chen, S.X. and Qin, Y.L. (2010)},	
	{A two-sample test for high-dimensional data with applications to gene-set testing}.
	{\em Ann. Statist.}, Vol.38({\bf 2}), 808--835.	
	\item {Choi, S.I. and Silverstein, J.W. (1995)},
	{Analysis of the limiting spectral distribution of large dimensional random matrices}.
	{\em J. Multivariate Anal.}, Vol.54({\bf 2}), 295--309.	
	\item {Dempster, A.P. (1958)},
	{A high dimensional two sample significance test}.
	{\em Ann. Math. Statist.}, Vol.29({\bf 4}), 995--1010.	
	\item{Dempster, A.P. (1960)},
	{A significance test for the separation of two highly multivariate small samples}.
	{\em Biometrics}, Vol.16({\bf 1}),41--50.	
	\item{Feng, L. and Zou, C. and Wang, Z. and Zhu, L. (2017)},
	{Composite ${T}^{2}$ test for high-dimensional data}.
	{\em Statistica Sinica}, Vol.27({\bf 3}), 1419--1436.	
	\item{Johnstone, I. M. and Paul, D. (2018)},
	{{P}{C}{A} in High Dimensions: An Orientation}.
	{\em Proc. IEEE}, Vol.106({\bf 8}), 1277--1292.   	
	\item{Ledoit, O. and P{\'e}ch{\'e}, S. (2011)},
	{Eigenvectors of some large sample covariance matrix ensembles}.
	{\em Probab. Theory Relat. Fields}, Vol.151, 233--264.	
	\item{Ledoit, O. and Wolf, M. (2004)},
	{A well-conditioned estimator for large-dimensional covariance matrices}.
	{\em J. Multivariate Anal.}, Vol.88, 365--411. 	
	\item{Ledoit, O. and Wolf, M. (2012)},
	{Nonlinear shrinkage estimation of  large-dimensional covariance matrices}.
	{\em Ann. Statist.}, Vol.40({\bf 2}), 1024--1060.
	\item{Ledoit, O. and Wolf, M. (2017)},
	{Numerical implementation of the QuEST function}.
	{\em Computational Statistics and Data Analysis}, Vol.115, 199--223.
	\item{Ledoit, O. and Wolf, M. (2018)},
	{Optimal estimation of a large-dimensional covariance matrix under Stein's loss}.
	{\em Bernoulli}, Vol.24({\bf 4B}), 3791--3832.    	
	\item{Li, H. and Aue, A. and Paul, D. and Peng, J. and Wang, P. (2020)},
	{An adaptable generalization of Hotelling’s $T^{2}$-test in high dimension}.
	{\em Ann. Statist.}, Vol.48({\bf 3}), 1815 -- 1847.	
	\item{Mar{\v{c}}enko, V.A. and Pastur, L.A. (1967)},
	{Distribution of eigenvalues for some sets of random}.
	{\em Sb. Math.}, Vol.1, 457--483.   	
	\item{Muirhead, R.J. (1982)},
	{Aspects of Multivariate Statistical Theory}.
	{\em Wiley, New York}. 	
	\item{Pan, G.M. and Zhou, W. (2011)},
	{Central limit theorem for {H}otelling's ${T}^{2}$ statistic under large dimension}.
	{\em Ann. Appl. Probab.}, Vol.21, 1860--1910.          	
	\item{Park, J. and Ayyala, D.N. (2013)},
	{A test for the mean vector in large dimension and small samples}.
	{\em J. Statist. Plan. Infer.}, Vol.143({\bf 5}), 929--943.	
	\item{Pitman, E.J.G. (1948)},
	{Lecture Notes on Nonparametric Statistical Inference: Lectures Given for the University of North Carolina}.
	University of North Carolina.	
	\item{Srivastava, M.S. and Du, M. (2008)},
	{A test for the mean vector with fewer observations than the dimension}.
	{\em J. Multivariate Anal.}, Vol.99({\bf 3}), 386--402.	
	\item{Srivastava, M.S. (2009)},
	{A test for the mean vector with fewer observations than the dimension under non-normality}.
	{\em J. Multivariate Anal.}, Vol.100({\bf 3}), 518--532.	
	\item{Silverstein, J.W. (1995)},
	{Strong convergence of the empirical distribution of eigenvalues of large dimensional random matrices}.
	{\em J. Multivariate Anal.}, Vol.55({\bf 2}), 331-339.	
	\item{Stein, C. (1975)},
	{Estimation of a covariance matrix}.
	Rietz lecture, 39th Annual Meeting IMS.	
	\item{Stein, C. (1986)},
	{Lectures on the theory of estimation of many parameters}.
	{\em J. Math. Sci}, Vol.34, 1373--1403.	
\end{enumerate}

	\vskip .65cm
	\noindent
	Institute of Statistical Science, Academia Sinica, Taipei.
	\vskip 2pt
	\noindent
	E-mail: teresatsai@stat.sinica.edu.tw
	\vskip 2pt
	
\end{document}